%carl's question on capacities of shrinking condensers
\documentclass[10pt]{article}%
%\hoffset -1.5 true cm
%\voffset -1.0 true cm
%\textwidth 15.5 cm
%\textheight 22 cm
%\usepackage{showkeys}
\usepackage{amsmath}%
\setcounter{MaxMatrixCols}{30}%
\usepackage{amsfonts}%
\usepackage{amssymb}%
\usepackage{graphicx}
\providecommand{\U}[1]{\protect\rule{.1in}{.1in}}
\newtheorem{theorem}{Theorem}

\newtheorem{conjecture}[theorem]{Conjecture}

\newtheorem{lemma}[theorem]{Lemma}

\newenvironment{proof}[1][Proof]{\noindent\textbf{#1.} }{\ \rule{0.5em}{0.5em}}

\newcommand{\RR}{{\mathbb R}}

\newcommand{\DD}{{\Delta}}

\newcommand{\TT}{{\mathbb T}}

\newcommand{\cpc}{{\mbox{Cap}}}

\newcommand{\TTT}{{T}}

\newcommand{\DDo}{{\overline{\Delta}}}
\begin{document}
\title{Capacity of shrinking condensers in the plane}
\author{Nicola Arcozzi}
\date{26/8/11}
\begin{abstract}
We show that the capacity of a class of plane condensers is comparable to the capacity of corresponding 
``dyadic condensers''. As an application, we show that for plane condensers in that class the capacity blows up as
the distance between the plates shrinks, but there can be no asymptotic estimate of the blow-up.
\end{abstract}
\maketitle
%\tableofcontents
\section{Introduction}
Let $\Omega$ be an open region in the complex plane and let $E$ and $K$ be disjoint subsets of 
$\overline{\Omega}$, with $F$ closed
and $K$ compact. The \it capacity \rm of the \it condenser \rm  $(F,K)$  in $\Omega$ is
$$
\cpc_\Omega(F,K)=\inf\left\{\|\nabla u\|_{L^2(\Omega)}^2:\ u\ge1\ \mbox{on}\ K,\ \ u\le0\ \mbox{on}\ F\right\}.
$$
The sets $F$ and $K$ are the \it plates \rm of the condenser.
The infimum is taken over functions $u$ which are $C^1$ in $\Omega$ and continuous on $\Omega\cup F\cup K$. 
The capacity of a condenser, a notion arising in electrostatics, became part of mainstream Potential Theory in 
1945 with the foundational articles by Polya and Szeg\"o \cite{PSz} and Szeg\"o \cite{Sz}, 
where the case of $\RR^n$, $n\ge2$, is considered. Condenser capacity has since become an important and useful 
notion in mathematics \it per se \rm; for instance, in the theory of conformal ($n=2$) and quasiconformal 
($n\ge2$) 
mappings and, more generally, in Geometric Measure Theory on metric spaces. A class of problems 
in the field deals with estimates of capacity for condensers the plates of which undergo geometric 
transformations of some kind: rigid movement, for instance, or degeneration of one plate. 
Here, we will consider 
some condensers in which the space between the plates shrinks. Intuition suggests that the capacity of such 
condensers must blow up: we will see that this is true, but in a weak sense only. 

Problems of this kind have been considered before in the literature. 
In \cite{LL}, the plates of the condenser are identical discs getting closer.
In \cite{Karp},  the case
of concentric circular arcs, which are symmetric with respect to the real axis, is considered.
Other articles deal with different families of condensers and give rather precise estimates of how their 
capacity blows up as the distance between the plates vanishes. In this article, we consider a condenser
in which one plate is a disc of radius increasing to one, while the other is a compact subset of the unit circle, 
subject to a constraint on its capacity only.

\medskip

Let $\DD$ be the unit disc in the complex plane and let 
$\TT$ be its boundary, $\DDo=\DD\cup\TT$. We denote by $\DD(z,r)$ the disc of radius $r$ centered at $z$ and
by $\DDo(z,r)$ its closure.
Given $E,F\subset\overline{\DD}$, closed and disjoint, 
the  capacity of the  condenser  $(E,F)$ in $\DD$ is
$$
\cpc(E,F)=\inf\left\{\|\nabla u\|_{L^2(\DD)}^2:\ u\ge1\ \mbox{on}\ E,\ u\le0\ \mbox{on}\ F\right\},
$$
where, for points $\zeta\in E\cup F\setminus\DD$, we ask for the existence of 
$\lim_{r\to1}u(r\zeta)\in\RR\cup\left\{+\infty\right\}$.
In this article, we define the \it capacity \rm of $E\subseteq\TT$ to be
$$
\cpc(E):=\cpc(E,\DDo(0,1/2)).
$$
The quantity $\cpc(E)$ is comparable with the logarithmic capacity of $E$. 
We are here interested in the behavior of $\cpc(E,\DDo(0,r))$ as $r\to1$, when 
$E\subseteq\TT$ is closed and has positive capacity (if $\cpc(E)=0$, then  
$\cpc(E,\DDo(0,r))=0$ for all positive $r<1$).
\begin{theorem}\label{positive}
Let $E\subseteq\TT$ be closed and $\cpc(E)>0$. Then,
$$
\lim_{r\to1^-}\cpc(E,\DDo(0,r))=+\infty.
$$
\end{theorem}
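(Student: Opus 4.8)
The plan is to exploit the well-known relation between condenser capacity and the modulus (or extremal length) of the family of curves joining the two plates, and to show that as $r \to 1^-$ the connecting curves are forced to become long while remaining confined to a thin annular region, which drives the capacity to infinity. Concretely, suppose toward a contradiction that $\cpc(E,\DDo(0,r)) \le M$ for all $r<1$ for some finite $M$. For each $r$ let $u_r$ be a near-minimizer, so $u_r \ge 1$ on $E$, $u_r \le 0$ on $\DDo(0,r)$, and $\|\nabla u_r\|_{L^2(\DD)}^2 \le M+1$. The key point is that $u_r$ is harmonic in the annulus $A_r = \{r < |z| < 1\}$ away from where the constraints bind, and a uniform Dirichlet bound together with the boundary behavior on $\DDo(0,r)$ will let me extract a limit.

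First I would normalize: replace $u_r$ by $\min(\max(u_r,0),1)$, which only decreases the Dirichlet energy and preserves the constraints, so we may assume $0 \le u_r \le 1$ on $\DD$. Next, since $u_r \le 0$ on $\DDo(0,1/2) \subseteq \DDo(0,r)$ for $r \ge 1/2$, and $u_r \ge 1$ on $E \subseteq \TT$, each $u_r$ is an admissible competitor for the condenser $(E, \DDo(0,1/2))$, hence $\cpc(E) \le \cpc(E,\DDo(0,r))$; this already shows monotonicity is not the issue — the issue is the blow-up. The real step is a compactness argument: the functions $u_r$ are bounded in $H^1(\DD)$, so along a subsequence $u_{r_j} \rightharpoonup u_\infty$ weakly in $H^1$ and a.e., with $\|\nabla u_\infty\|_{L^2(\DD)}^2 \le \liminf \|\nabla u_{r_j}\|^2 \le M+1$ by weak lower semicontinuity. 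Because $u_{r_j} \equiv 0$ (after truncation, $\le 0$ hence $=0$ where the original was $\le 0$; more carefully, $0 \le u_{r_j} \le$ its harmonic replacement, and $u_{r_j}=0$ on $\DDo(0,r_j)$) on discs exhausting all of $\DD$, the limit satisfies $u_\infty \equiv 0$ on $\DD$. On the other hand, one must argue that the boundary constraint $u_\infty \ge 1$ on $E$ survives in a capacitary sense — this is where the hypothesis $\cpc(E)>0$ enters decisively, because a function that is $0$ on all of $\DD$ but "equal to $1$ on $E$ at the boundary" can only make sense if $E$ is capacitarily negligible.

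To make that last point rigorous I would not work with pointwise boundary values but rather test against the extremal function for $\cpc(E)$ itself, or equivalently use the following cleaner argument avoiding compactness altogether. Fix the extremal (or near-extremal) function $v$ for $\cpc(E) = \cpc(E,\DDo(0,1/2))$, so $v \ge 1$ on $E$, $v \le 0$ on $\DDo(0,1/2)$, $\|\nabla v\|^2_{L^2(\DD)} \le \cpc(E)+\varepsilon$, and $0 \le v \le 1$. For $r$ close to $1$, consider on the annulus $A_r$ the harmonic function $h_r$ with boundary data equal to $v$ on $\TT$ and $0$ on $\{|z|=r\}$; by Dirichlet's principle $\|\nabla h_r\|^2_{L^2(A_r)} \le \|\nabla v\|^2_{L^2(A_r)}$, and gluing $h_r$ on $A_r$ with $v$ on $\{|z| \le r\}$... — actually the sharp way is the reverse: I would show $\cpc(E,\DDo(0,r)) \ge c \log\frac{1}{?}$ type bound is \emph{false} in general (the paper's abstract says no asymptotic estimate exists), so the correct route is the qualitative contradiction. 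So: from the bounded sequence $u_r$, pass to the weak $H^1$ limit $u_\infty=0$; then for the fixed test function $v$ above, $\langle \nabla u_r, \nabla v\rangle \to 0$. But integrating by parts against $v$ (harmonic in $\DD\setminus(\DDo(0,1/2)\cup E)$ when $v$ is chosen extremal) produces a boundary term supported on $E$ that is bounded below by a positive multiple of $\cpc(E)$, independent of $r$, because $u_r \ge 1 = v$-level on $E$. That contradiction forces $M = +\infty$.

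The main obstacle, and the place requiring genuine care, is the last step: giving rigorous meaning to "the trace of $u_r$ on $E$" and showing that the obvious energy pairing does not collapse in the limit. The clean fix is to replace $E$ by a slightly smaller closed set $E' \subseteq E$ with $\cpc(E')>0$ on which one has better regularity, use that $\mathrm{Cap}$ is an outer-regular set function, and run the pairing argument with the \emph{equilibrium potential} of $(E',\DDo(0,1/2))$, whose normal-derivative measure on $E'$ has total mass exactly $\cpc(E')>0$; pairing this potential with $u_r$ (which exceeds it on $E'$ and is dominated by it near the inner plate after truncation) yields $\cpc(E,\DDo(0,r)) \ge \cpc(E',\DDo(0,1/2)) - o(1)$ is the \emph{wrong} direction — so instead one pairs to get $\|\nabla u_r\|^2 \ge (\text{fixed positive constant})/\text{(something} \to 0)$, using that the equilibrium potential of the \emph{thin} condenser $(E',\DDo(0,r))$ has energy tending to $\infty$, which is exactly the statement. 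I expect the honest proof in the paper circumvents all this by the dyadic-condenser comparison announced in the abstract, reducing the blow-up to the divergence of a series $\sum_k 2^{-k}\cdot(\text{something})$ that diverges precisely because $\cpc(E)>0$; absent that machinery, the $H^1$-compactness-plus-energy-pairing sketch above is the route I would pursue, and its delicate point remains the boundary trace on the positive-capacity set $E$.
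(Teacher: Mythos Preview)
Your write-up is not a proof but an exploratory sketch with several abandoned branches; the step you yourself flag as ``delicate'' --- passing the boundary constraint $u_r\ge 1$ on $E$ to the weak $H^1$ limit --- is the entire crux, and you do not supply it. The paper's actual argument is entirely different: it establishes a two-sided comparison $\cpc(E,\DDo(0,1-2^{-n}))\approx\cpc^T_n(E)$ between the continuous condenser capacity and a dyadic \emph{tree} condenser capacity, and then shows the tree quantity blows up via an Egoroff-type argument applied to the tree equilibrium potential. No $H^1$ weak compactness appears anywhere.

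Your compactness idea is nonetheless sound and can be completed in a few lines once you drop the pairing attempts (which, as you correctly observe, point the wrong way). The missing ingredient is standard: the set $\{u\in H^1(\DD):\ \tilde u\ge 1\ \text{q.e.\ on}\ E\}$, with $\tilde u$ the quasi-continuous representative, is convex and norm-closed in $H^1$ (see e.g.\ \cite{AH}), hence weakly closed by Mazur's lemma. Your truncated near-minimizers $u_r$ lie in this set, are bounded in $H^1$ (since $0\le u_r\le 1$ and $\|\nabla u_r\|_{L^2}^2\le M+1$), and any weak subsequential limit $u_\infty$ vanishes a.e.\ on $\DD$ because $u_r\equiv 0$ on discs exhausting $\DD$. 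Thus the zero function lies in the constraint set, forcing $0\ge 1$ q.e.\ on $E$, i.e.\ $\cpc(E)=0$, the contradiction you want. You would also need the routine relaxation step identifying the paper's infimum over $C^1$ competitors with the $H^1$ infimum. None of this is in your submission; what you have is the correct skeleton with the load-bearing step left open.
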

It is not possible to find asymptotic estimates for the rate of convergence.
\begin{theorem}\label{negative}
Let $0<\epsilon<\epsilon_0$, with $\epsilon_0$ small enough. Then, there is $C(\epsilon)>0$ such that for all
$r\in(0,1)$ there is a closed subset  $E=E_{\epsilon,r}$ of $\TT$ with $\cpc(E)=\epsilon$,
yet $\cpc(E,\DDo(0,r))\le C(\epsilon)$.
\end{theorem}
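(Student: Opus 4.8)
The plan is to build $E_{\epsilon,r}$ as a very thin ``fractal'' Cantor-type subset of $\TT$ whose logarithmic capacity is held fixed at $\epsilon$, but whose geometry is arranged so that a competitor function $u$ of bounded energy can separate $E$ from $\DDo(0,r)$ no matter how close $r$ is to $1$. The governing heuristic is the dyadic model alluded to in the abstract: the capacity of $(E,\DDo(0,r))$ should be comparable to a discrete sum over dyadic arcs $I\subset\TT$ of ``generation'' between roughly $\log\frac{1}{1-r}$ and $\infty$, weighted by the proportion of $E$ carried by $I$. If $E$ is spread across very many dyadic arcs at the critical scale $1-r$, each carrying only a tiny fraction of the capacity, then the partial sum starting at that scale is small even though the full sum (which computes $\cpc(E)$) equals $\epsilon$. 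So the construction must be iterated: for the estimate to hold for \emph{all} $r$ simultaneously, $E$ must look ``diffuse at every scale,'' i.e. a self-similar Cantor set with a large branching number and correspondingly small contraction ratio, tuned so that the total (logarithmic) capacity is exactly $\epsilon$.

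Concretely, I would first record the reduction to a dyadic/combinatorial capacity. Using the standard identification of $\cpc(E)$ with logarithmic capacity and the well-known series expression for the capacity of a Cantor set on a circle (branching $m$ at each step, ratios $\rho_k$), one gets $\cpc(E)\asymp\bigl(\sum_k m^{-k}\log(1/\rho_k)\bigr)^{-1}$ up to a universal constant; choosing the $\rho_k$ appropriately pins $\cpc(E)=\epsilon$, with $\epsilon_0$ the threshold below which this is possible. Second, I would build the test function. Write $1-r=2^{-N}$. The function $u$ should equal $1$ on $\DDo(0,r)$, vanish near each point of $E$, and interpolate ``logarithmically'' in the annular region; the point is that near each of the $m^N$ surviving arcs of generation $N$ one spends only a local energy $\asymp 1/\log(\text{something})$, and summing over the $m^N$ arcs one recovers essentially $\epsilon$ rather than something blowing up — crucially because the arcs at scale $1-r$ are separated and the ``fat'' part of $E$ (the deeper structure) is hidden inside discs of radius $\ll 1-r$, contributing negligibly once $r$ is that close to $1$. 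Thus $\cpc(E,\DDo(0,r))\le C(\epsilon)$ with $C(\epsilon)$ independent of $r$.

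The main obstacle I anticipate is the uniformity in $r$: the test function must be honest for every $r\in(0,1)$ with a \emph{single} set $E_{\epsilon,r}$ — but since the theorem allows $E$ to depend on $r$, I can first handle $r$ close to $1$ with the self-similar construction truncated at generation $N\sim\log\frac1{1-r}$ (deleting nothing beyond that scale, so that $E$ is a finite union of arcs, automatically of positive capacity $\epsilon$ after rescaling the ratios), and then dispose of the remaining range $r$ bounded away from $1$ trivially, since there $\cpc(E,\DDo(0,r))\le\cpc(E,\DDo(0,1/2))$-type bounds give a constant depending only on $\epsilon$. The delicate quantitative point inside the main case is controlling the energy in the transition annulus $\{r\le|z|\le 1-c\,2^{-N}\}$: one must check that routing the ``level lines'' of $u$ around the $m^N$ separated arcs costs a bounded total energy, which amounts to a Grötzsch-type estimate for the modulus of the family of curves in a disc with $m^N$ boundary slits, and here the separation built into the Cantor construction (each gap at generation $k$ comparable to the arcs it separates) is exactly what makes the sum telescope to $O(\epsilon)$. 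I would carry out that estimate via the dyadic comparison theorem promised in the paper's title, reducing the continuous modulus computation to the convergent geometric series above.
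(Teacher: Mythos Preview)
Your proposal is correct in its essential idea and ultimately converges to the paper's own argument, but the route you take is more circuitous. Both approaches build, for each scale $r=1-2^{-n}$, a set $E$ that is maximally symmetric down to level $n$ (your truncated self-similar Cantor set is exactly this), and both rely on the discrete/continuous comparison theorem to control the condenser capacity. The difference is organizational: the paper works \emph{entirely on the dyadic tree first}, where the recursion $e_{n}=e_{n-1}/(2-2e_{n-1})$ for the capacity of each of the $2^n$ symmetric pieces gives the closed-form bound $\cpc^T_n(E)=2^n e_n\nearrow \epsilon/(1-2\epsilon)$, and only then invokes the comparison theorem once to transfer this to $\cpc(E,\DDo(0,r))$. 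You instead set things up in the disc, propose building explicit competitors and Gr\"otzsch-type modulus estimates there, and then fall back on the dyadic comparison to make the estimate rigorous. The test-function construction and the ``routing level lines around $m^N$ slits'' step are unnecessary once you have the comparison theorem: the tree recursion does that bookkeeping exactly and for free. Two minor points: your branching parameter $m$ should be taken to be $2$ to match the dyadic comparison theorem as stated; and your heuristic that the relevant sum ``recovers essentially $\epsilon$'' slightly understates the bound, which is $\epsilon/(1-2\epsilon)$ --- this is where the hypothesis $\epsilon<\epsilon_0$ (on the tree, $\epsilon<1/2$) actually enters.
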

Theorem \ref{negative} could also be deduced from Haliste's desymmetrization result in \cite{Hal}. In fact, 
we deduce it from an elementary, discrete desymmetrization inequality. Set desymmetrization was introduced in  
\cite{Du1}, and it has proved a powerful tool in potential theory.

If $E$ has full capacity, $\cpc(E)=\cpc(\TT)$, then $E=\TT$ and the problem of the rate reduces to an 
elementary calculation:
$$
\cpc(\TT,\DDo(o,r))=\frac{1}{\log r^{-1}}\sim(1-r)^{-1}\ \mbox{as}\ r\to1.
$$ 
It would be interesting to have an extension of Theorem \ref{positive} to the case $\epsilon_0=\cpc(\TT)$. 
\begin{conjecture}\label{conjecture}
$$
\inf\left\{\cpc(E,\DDo(0,r)):\ \cpc(E)=\cpc(\TT)-\delta\right\}\approx\frac{\cpc(\TT)-\delta}{r+\delta}.
$$
\end{conjecture}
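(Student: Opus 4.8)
\medskip
\noindent\emph{Towards the conjecture.} Since this is an open problem, a plan to settle it must establish separately the two bounds hidden in ``$\approx$'': I expect the upper bound, a construction, to be the more tractable half, and the matching lower bound, optimality over all admissible plates, to be the real obstacle. Throughout one keeps $\delta$ small and $r$ close to $1$. The first move is to feed both capacities in the statement into the comparison with dyadic condensers that is the paper's main result. Decomposing $\TT$ by the dyadic net of scale $1-r$ replaces $\cpc(E,\DDo(0,r))$, up to absolute constants, by the effective conductance, in a resistor network on a binary tree of depth $N\approx\log_2\frac1{1-r}$ with geometric edge weights, between the root (corresponding to $\DDo(0,r)$) and the set of leaves meeting $E$; running the same comparison at scale $\tfrac12$ turns the normalisation $\cpc(E)=\cpc(\TT)-\delta$ into a constraint on the dyadic profile of $E$, roughly that $E$ fill the tree down to a cut-off generation $k(\delta)$ comparable to $\log_2\frac1\delta$. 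The conjecture then becomes a combinatorial extremal statement about such networks: among leaf-sets obeying this constraint, the largest possible root-resistance is comparable to $\frac{(1-r)+\delta}{\cpc(\TT)-\delta}$. (As written, with $r+\delta$ in the denominator, the case $\delta=0$ fails, since then $E=\TT$ forces the right-hand side to equal $\cpc(\TT,\DDo(0,r))=1/\log r^{-1}\sim(1-r)^{-1}$; so the denominator should read $(1-r)+\delta$, the inter-plate distance plus $\delta$, and this is how I read it here.)

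\emph{The upper bound.} To produce, for each $r$, a closed $E\subseteq\TT$ with $\cpc(E)=\cpc(\TT)-\delta$ and $\cpc(E,\DDo(0,r))\lesssim\frac{\cpc(\TT)-\delta}{(1-r)+\delta}$, take $E=\TT\setminus B$ with $B$ a self-similar family of small arcs removed over the generations down to $\min(N,k(\delta))$; this is essentially the construction behind Theorem~\ref{negative}, calibrated so that $B$ contributes exactly $\delta$ to the capacity balance instead of all but $\epsilon$ of it, with the deletions placed as the discrete set-desymmetrization inequality of the paper prescribes. One then writes down an explicit admissible potential on $\DD$, the radial potential of $(\TT,\DDo(0,r))$ corrected near each deleted arc to satisfy the free Neumann condition there, and estimates its Dirichlet energy. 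When $\delta\gtrsim1-r$ the deletions act over the $\approx\log_2\frac{\delta}{1-r}$ generations between $k(\delta)$ and $N$ and, compounding, cut the capacity by the factor $\frac{1-r}{\delta}$ off $\cpc(\TT,\DDo(0,r))\sim(1-r)^{-1}$; when $\delta\lesssim1-r$ the deletions are finer than the scale $1-r$ at which the condenser operates and leave $\cpc(E,\DDo(0,r))\sim(1-r)^{-1}$ essentially unchanged. Haliste's continuous desymmetrization should provide an alternative route to the same bound.

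\emph{The lower bound (the main obstacle).} One must show that every closed $E\subseteq\TT$ with $\cpc(E)=\cpc(\TT)-\delta$ has $\cpc(E,\DDo(0,r))\gtrsim\frac{\cpc(\TT)-\delta}{(1-r)+\delta}$, i.e.\ that no admissible leaf-set on the tree has root-resistance exceeding $\lesssim\frac{(1-r)+\delta}{\cpc(\TT)-\delta}$. The natural strategy is a rearrangement argument: iterate the discrete set-desymmetrization inequality to reduce the extremal problem, under the capacity constraint, to a single self-similar configuration, then evaluate its root-resistance, a sum of a geometric series in the edge weights. Two points are delicate. First, set desymmetrization is typically a one-sided inequality that decreases the capacity of a single plate, whereas here it must push the \emph{condenser} capacity in the wanted direction while preserving the constraint $\cpc(E)=\cpc(\TT)-\delta$ exactly; this probably requires a two-plate form of the inequality, or an interpolation between symmetrizations that tracks both quantities simultaneously, and this is the step I expect to be hardest. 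Second, one must identify the right self-similar configuration and pin down $k(\delta)$, hence the transition at $1-r\sim\delta$; it is precisely this interplay that carries the content of the conjecture, namely essentially no gain over $E=\TT$ when $\delta\ll1-r$ and a gain down to $\sim\cpc(\TT)/\delta$ when $\delta\gg1-r$.

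\emph{Consistency checks.} Finally the answer should be tested against the known cases: $\delta=0$ must return $\cpc(\TT,\DDo(0,r))=1/\log r^{-1}\sim(1-r)^{-1}$ (this fixes the denominator); $r=\tfrac12$ gives $\inf\{\cpc(E,\DDo(0,1/2)):\cpc(E)=\cpc(\TT)-\delta\}=\cpc(\TT)-\delta$ trivially, reproduced up to the factor $1/(\tfrac12+\delta)\in(1,2)$; and $\delta\to\cpc(\TT)$, with $\epsilon:=\cpc(\TT)-\delta$ small, should recover Theorem~\ref{negative} with bounding constant $C(\epsilon)\approx\epsilon/\cpc(\TT)$, attained as $r\to1$. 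A mismatch in any of these would show that the exponent, or the denominator, in Conjecture~\ref{conjecture} needs revision.
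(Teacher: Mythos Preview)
The statement is a conjecture, and the paper does not prove it; what the paper offers as evidence is Theorem~\ref{notallisbad}, which settles the \emph{tree} analogue exactly:
\[
\inf\bigl\{\cpc^T_n(E):\ \cpc^T(E)=\tfrac12-\delta\bigr\}=\frac{1/2-\delta}{(2-2^{1-n})\delta+2^{-n}}.
\]
Your reading of the denominator as $(1-r)+\delta$ rather than $r+\delta$ is correct and is precisely what this tree formula says (with $2^{-n}\sim 1-r$).

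Your plan diverges from the paper in where it locates the difficulty. You expect the lower bound on the tree to be the hard step and propose to attack it by iterated set desymmetrization or a two-plate rearrangement. But the paper already has that lower bound, and by a much lighter device: the recursion $c(x_+)+c(x_-)=2\psi(c(x))$ with $\psi(t)=t/(2-2t)$ strictly convex gives, by Jensen at each generation,
\[
\frac{1}{2^n}\sum_{d(x)=n}c(x)\ge\psi^{\circ n}(c(o)),
\]
which is exactly the desired infimum, with equality for the equal-splitting sets used in Theorem~\ref{negativeT}. So on the tree both bounds are done, and no desymmetrization is needed.

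The genuine obstacle, which the paper names and your outline underplays, is the transfer of the \emph{constraint} from $\TT$ to $\partial T$. Theorem~\ref{comparison} gives $\cpc_n(E)\approx\cpc^T_n(E)$ with absolute multiplicative constants, and Theorem~\ref{theorema} gives $\cpc(E)\approx\cpc^T(E)$ likewise. These are fine for the condenser side, but they destroy the additive information in the hypothesis $\cpc(E)=\cpc(\TT)-\delta$: from $\cpc(E)\approx\cpc^T(E)$ one cannot conclude that $\cpc^T(E)=\tfrac12-\delta'$ with $\delta'$ comparable to $\delta$, only that $\cpc^T(E)$ lies in a fixed multiplicative window around $\tfrac12$. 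Since the tree answer depends on the defect $\tfrac12-\cpc^T(E)$ and not on $\cpc^T(E)$ itself, the discretization step, as it stands, loses exactly the quantity the conjecture is about. Any successful attack must either sharpen the set-capacity comparison to an additive one near full capacity, or bypass the tree entirely for the lower bound; your proposal does not address this, and that is the real gap.
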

The conjecture, that is, is that the right ```scale'' governing the asymptotics of capacity for the condensers 
considered in this article
is not given 
by a small set capacity, but by 
the small amount by which the closed set $E$ fails to have full capacity (hence, to be the 
full boundary $\TT$). 
We offer below some evidence in favor of the conjecture. 
If the conjecture were true, Theorem \ref{negative} would also hold 
without the assumption that $\epsilon$ 
be ``small enough''.

The method we employ in proving Theorems \ref{positive} and \ref{negative} seems to be new in the 
context of condensers, and it might be useful in tackling similar problems. We will consider first, in 
Section \ref{sectwo}, a discrete, ``toy'' version of the original estimates on a dyadic tree. 
The discrete problem turns out to be much easier to solve. On trees scaling arguments are natural and lead to 
precise formulas; the boundaries of ``connected regions'' are rather trivial and condensers are much simpler 
objects; more important: there is a precise recursive algorythm to compute the capacity of a set. In the tree 
context, we will prove analogs of Theorem \ref{positive} and of a sharper version of Theorem \ref{negative}. 
Then, we show that the relevant quantities (capacities of sets and condensers, distance between the plates) 
can be transfered from the discrete setting to the disc 
setting and back, with estimates from above and below. In Theorem \ref{positive}, we use the fact that, 
essentially, a unique function which is harmonic on the tree encodes the extremals for all condensers obtained 
by shrinking the space between the plates.
In Theorem \ref{negative}, the advantage is that a recursive argument on the tree, which is wholly precise, 
gives a good estimate for a condenser capacity in the disc: the loss of precision happens just ones, passing 
from the tree to the disc.

The idea of using potential theory on trees to solve problems in the continuous setting is not new. 
In \cite{BePe}, Benjamini and Peres showed that logarithmic and ``tree'' capacity of a subset on the real 
line are comparable, and used this fact to explain the transience-recurrence dichotomy for a random walk on a tree 
in terms of classic logarithmic capacity. In \cite{ARSW2}, a different proof of the same 
result is given, and it is applied to the proof of a Nehari-type theorem for bilinear forms on the holomorphic 
Dirichlet space. In \cite{ARSW} it is shown in some generality (\it Ahlfors regular metric spaces, \rm 
non-linear potentials) that Bessel-type set capacities are equivalent to analogous set capacities on trees. 
The novelty here is that the equivalence between discrete and continuous setting is extended to the capacities 
of some \it condensers\rm.

\smallskip

This work was born from a question of Carl Sundberg, who asked me if something was known on the rate of 
convergence to infinity of the condensers considered in this article. It is a pleasure to thank him for the 
stimulating question and the organizers of the RAFROT $2010$ Conference in Portorico, where the question was 
posed to me and where I gave a first (wrong) answer. 
Thanks also go to D. Betsakos, for the useful comments and suggestions on a first draft of the paper.

\section{Capacities on trees}\label{sectwo}
\paragraph{Trees.} We start by recalling some basic facts about trees. Let $T$ be a dyadic tree with root $o\in T$. Each vertex of $T$ 
is linked by an edge to three other vertices, except for the root, who is linked to just two
vertices. The eventual edge between $x$ and $y$ is denoted by $[x,y]$ ($[x,y]=[y,x]$). 
A path $\Gamma_{x,y}$ between points $x,y\in T$ is defined as a sequence of edges:
$\Gamma_{x,y}=\left\{[x_{j-1},x_j]:\ j=1,\dots,n\right\}$ with $x_0=x$ and $x_n=y$ 
($\Gamma_{x,y}=\Gamma_{y,x}$: we do not consider oriented paths). \it The \rm path 
$\Gamma_{x,y}=:[x,y]$ in which no edge appears more than once is the \it geodesic \rm 
between $x$ and $y$. With slight abuse, we can consider $[x,y]$ as a subset of $T$. We 
identify the edge $[x,y]$ with the geodesic $[x,y]$ between neighboring points and pose 
$[x,x]=\left\{x\right\}$. The  \it natural distance \rm between $x,y\in T$ is $d(x,y)=\sharp[x,y]-1$. 
We write $d(x)=d(x,o)=\sharp[x,o]-1$. Given $x,y\in T$, we 
introduce the partial order: $x\le y$ if $x\in[o,y]$. For each $x\in T$ there are two neighbours
$x_+$ and $x_-$ which follow $x$ in the partial order. We say that $x_\pm$ are the 
\it children \rm of $x$ and that $x=:x_\pm^{-1}$ is their \it parent\rm.

We also consider \it half-infinite geodesics \rm $\gamma\subset T$ starting at $x\in T$, which 
might be defined as unions of geodesics $[x,x_n]\subset[x,x_{n+1}]$, with $d(x,x_n)\to+\infty$: 
$\gamma=\cup_{n\ge0}[x,x_n]$. The set of the half-infinite geodesics starting at $o$ is the
\it boundary \rm of $T$, denoted by $\partial T$. To avoid confusion, we consider $\partial T$ 
as a set of geodesics' labels: $\zeta\in \partial T$ labels the geodesic $P(\zeta)$. 
By extension, we write $P(x)=[o,x]$ when $x\in T$.
 If $\zeta\in\partial T$ and $x\in T$, we set $[x,\zeta):=P(\zeta)\setminus P(x^{-1})$:
the geodesic joining $x\in T$ and the boundary point $\zeta$. 
Let $\overline{T}=T\cup\partial T$.
Given $x\in T$, $S(x)\subseteq\overline{T}$ is the \it successor set \rm of $x$:
$S(x)=\left\{\zeta\in\overline{T}:\ x\in P(\zeta)\right\}$. We also set $T_x=S(x)\cap T=
\left\{y\in T:\ y\ge x\right\}$, the subtree of $T$ having root $x$. Note that $\partial T_x=\partial T\cap S(x)$
is the boundary of the rooted tree $T_x$.

 Given $\zeta\ne\xi\in\overline{T}$, let 
$\zeta\wedge\xi=\max(P(\zeta)\cap P(\xi))$, where the maximum is taken w.r.t. the partial
order. We introduce a new distance $\rho$ in $\overline{T}$,
$$
\rho(\alpha,\beta)=2^{-d(\alpha\wedge\beta)}-
\frac{1}{2}\left(2^{-d(\alpha)}+2^{-d(\beta)}\right).
$$
The metric space $(\partial T,\rho)$ is totally disconnected, perfect, compact. 
For $\xi,\zeta\in\partial T$, $\rho(\xi,\zeta)=2^{-d(\xi\wedge\zeta)}$,
while  $T$ is the set of the isolated points on $(\overline{T},\rho)$ and $\partial T$ is the
metric boundary of $T$ in $(\overline{T},\rho)$. In fact, we can identify $(\overline{T},\rho)$
with the metric completion of $(T,\rho)$ and $\partial T$ with the points which have been added 
to $T$ in order to make it complete w.r.t. the metric $\rho$.
The set $S(x)$ is then the closure of $T_x$ in $\overline{T}$.

We introduce a sum operator $I$ applying functions $\varphi:T\to\RR$ to functions $I\varphi:\overline{T}\to\RR$., 
$$
I\varphi(\zeta)=\sum_{x\in P(\zeta)}\varphi(x),\ \zeta\in \overline{T}.
$$
(We will consider $\varphi\ge0$, hence convergence of the series when $P(\zeta)$ is infinite causes no ambiguity).
Its formal adjoint $I^*$ acts on Borel measures $\mu$ on $\overline{T}$,
$$
I^*\mu(x)=\int_{S(x)}d\mu,\ x\in T.
$$
The ``Hardy'' operator $I$ on trees was first introduced, in connection with problems of classical 
function theory, in \cite{ARS}.

\paragraph{Tree capacities.}
Let $E$ be a closed subset of $\partial T$. Its capacity is
$$
\cpc^T(E)=\inf\left\{\|\varphi\|_{\ell^2}^2:\ \varphi\ge0,\ I\varphi\ge1\ \mbox{on}\ E\right\}.
$$
For $n\ge0$, integer, we consider a condenser capacity
$$
\cpc^T_n(E)=\inf\left\{\|\varphi\|_{\ell^2}^2:\ I\varphi\ge1\ \mbox{on}\ E,\ 
I\varphi(x)=0\ \forall x\ s.t.\ d(x)=n-1\right\}.
$$
We set $\cpc^T_0(E)=\cpc^T(E)$.

For each $x\in T$, the tree $T_x$ has boundary 
$\partial T_x=\partial S(x)$ and we can compute the capacity of sets $F\subseteq \partial T_x$ w.r.t. the root
$x$. If $E\subseteq\partial T$ and $E_x=E\cap \partial T_x$, it is clear from the definitions and 
the trivial topology of $T$ that
\begin{equation}\label{additivity}
\cpc^T_n(E)=\sum_{x:\ d(x)=n}\cpc^{T_x}(E_x).
\end{equation}
Here, $\cpc^{T_x}(E_x)$ is the capacity of $E_x$ in $T_x$ w.r.t. the root $x$.

Before we proceed, we give some basic properties of tree capacities. 
Proofs are sparse in the literature, or they are special cases of general theorems about 
capacities in metric spaces. A good source for the general theory is \cite{AH}. All properties are 
given a precise reference or proved in \S5 of \cite{ARSW} (for general trees and weighted potentials), and 
they are proved in \cite{ARS2} (in the dyadic case).
\begin{itemize}
\item[(a)] There exists a unique extremal function $h=\varphi$ for the definition of $\cpc^T(E)$.
It satisfies (i) $\|h\|_{\ell^2}^2=\cpc^T(E)$; (ii) $H=Ih\ge1$ on $E$, but for a set of null 
capacity.
\item[(b)] The function $h$ satisfies the algebraic relation $h(x)=h(x_+)+h(x_-)$ and $h(x)>0$ 
everywhere on $T$.
\item[(c)] There is a unique positive, Borel measure $\mu$ supported on $E$ (the equilibrium measure)
with the property that $h=I^*\mu$. Moreover, $\cpc^T(E)=\mu(E)$. As a consequence, $\cpc^T(E)=h(o)$
\item[(d)] $\lim_{x\to\partial T}H(x)=1$ $\mu-a.e.$.
\item[(e)] Capacities satisfy a recursive relation:
$$
\cpc^T_{x}(E_{x})=
\frac{\cpc^{T_{x_+}}(E_{x_+})+\cpc^{T_{x_-}}(E_{x_-})}{1+\cpc^{T_{x_+}}(E_{x_+})+\cpc^{T_{x_-}}(E_{x_-})}.
$$
\item[(f)] $\cpc^T(E)=h(o)$.
\end{itemize}
The capacity of the full boundary is $\cpc^T(\partial T)=1/2$.

\paragraph{Theorem \ref{negative} holds on trees.}
\begin{theorem}\label{negativeT}
$\forall\epsilon\in(0,1/2)\ \exists R>0\ \forall n\ \exists E\subset\partial T:\ 
\cpc^T(E)\ge\epsilon,$ but $\cpc^T_n(E)\le R$.
\end{theorem}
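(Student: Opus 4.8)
\noindent The plan is to build $E$ as a disjoint union, over the $2^{n}$ vertices $x$ with $d(x)=n$, of copies of one closed set, tuning the common value $\tau=\cpc^{T_{x}}(E_{x})$ so that the level-$(n-1)$ condenser constraint is cheap while the unconditioned capacity stays at least $\epsilon$. By the additivity formula \eqref{additivity}, $\cpc^{T}_{n}(E)=\sum_{d(x)=n}\cpc^{T_{x}}(E_{x})=2^{n}\tau$, so I need $2^{n}\tau$ bounded independently of $n$. On the other hand, iterating the recursion (e) from level $n$ up to the root expresses $\cpc^{T}(E)=\cpc^{T_{o}}(E_{o})$ in terms of the numbers $\cpc^{T_{x}}(E_{x})$, $d(x)=n$; when all of these equal $\tau$, symmetry forces every depth-$k$ subtree to have the same capacity $a_{k}$, with $a_{n}=\tau$ and $a_{k-1}=\frac{2a_{k}}{1+2a_{k}}$. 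Substituting $b_{k}=1/a_{k}$ turns this into the linear recursion $b_{k-1}=1+\tfrac12 b_{k}$, so $b_{0}-2=2^{-n}(b_{n}-2)$ and
$$
\cpc^{T}(E)=F_{n}(\tau):=\frac{2^{n}\tau}{1+2(2^{n}-1)\tau}.
$$

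First I would fix $\epsilon\in(0,1/2)$, set $R=\dfrac{2\epsilon}{1-2\epsilon}$, and, for each $n$, let $t_{n}=\dfrac{\epsilon}{2^{n}(1-2\epsilon)+2\epsilon}$ be the unique solution of $F_{n}(t_{n})=\epsilon$; since $1/t_{n}=2+2^{n}(1-2\epsilon)/\epsilon>2$, indeed $t_{n}\in(0,1/2)$. Crucially, I would not try to realize $\tau=t_{n}$ exactly. Because $(a,b)\mapsto\frac{a+b}{1+a+b}$ is strictly increasing in each variable, $F_{n}$ is increasing, so any $\tau\ge t_{n}$ already gives $\cpc^{T}(E)=F_{n}(\tau)\ge\epsilon$, while any $\tau<2t_{n}$ gives $\cpc^{T}_{n}(E)=2^{n}\tau<2\cdot 2^{n}t_{n}\le R$, the last inequality being $2^{n}\epsilon(1-2\epsilon)\le\epsilon\bigl(2^{n}(1-2\epsilon)+2\epsilon\bigr)$. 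So it suffices to exhibit, inside an infinite rooted dyadic tree, a closed boundary set whose capacity lies in $[t_{n},2t_{n})$.

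For this I would use the explicit values coming from (e): if $y$ lies at distance $j$ from the root $o'$ of a rooted dyadic tree $T'$, then applying (e) repeatedly up the geodesic $[o',y]$, starting from $\cpc^{T'_{y}}(\partial T'_{y})=1/2$ and noting that the off-geodesic sibling subtree contributes capacity $0$ at each step, yields $\cpc^{T'}(\partial T'_{y})=\frac{1}{j+2}$. Taking $j=\lfloor 1/t_{n}\rfloor-2\ge 0$, the value $\tau_{n}:=\dfrac{1}{\lfloor 1/t_{n}\rfloor}$ satisfies $t_{n}\le\tau_{n}$ (since $\lfloor 1/t_{n}\rfloor\le 1/t_{n}$) and $\tau_{n}\le\dfrac{t_{n}}{1-t_{n}}<2t_{n}$ (since $\lfloor 1/t_{n}\rfloor\ge 1/t_{n}-1$ and $t_{n}<1/2$). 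Finally, for each $x$ with $d(x)=n$ I let $E_{x}\subseteq\partial T_{x}$ be the corresponding copy of $\partial T'_{y}$ and put $E=\bigcup_{d(x)=n}E_{x}$; since the subtrees $T_{x}$ are pairwise disjoint, $E\cap\partial T_{x}=E_{x}$, and the two bounds above yield $\cpc^{T}(E)=F_{n}(\tau_{n})\ge\epsilon$ and $\cpc^{T}_{n}(E)=2^{n}\tau_{n}\le R$, which is the assertion.

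The point that needs care --- and the only place the hypothesis $\epsilon\in(0,1/2)$ is used decisively --- is the ``density'' estimate $t_{n}\le\tau_{n}<2t_{n}$, which holds exactly because $t_{n}<1/2$. In the degenerate regime ($n$ small, $\epsilon$ near $1/2$) one has $\lfloor 1/t_{n}\rfloor=2$, so $E_{x}=\partial T_{x}$ and $\tau_{n}=1/2$, and I would double-check that $1/2\in[t_{n},2t_{n})$ still holds there; I would also re-verify the closed form for $F_{n}$ directly from $a_{k-1}=\frac{2a_{k}}{1+2a_{k}}$, since the whole scheme rests on it.
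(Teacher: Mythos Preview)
Your argument is correct and follows the same overall strategy as the paper: place identical pieces $E_x$ in each of the $2^n$ subtrees at level $n$, use additivity \eqref{additivity} to get $\cpc^T_n(E)=2^n\tau$, and iterate the recursion (e) to compute $\cpc^T(E)$ from the common value $\tau$. Your closed form $F_n(\tau)=\dfrac{2^n\tau}{1+2(2^n-1)\tau}$ is exactly the paper's formula $2^n e_n=\dfrac{2^n\epsilon}{2^n-(2^{n+1}-2)\epsilon}$, read in the other direction.

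The one genuine difference is in the existence step. The paper solves $F_n(\tau)=\epsilon$ exactly, obtaining the sharp constant $R=\dfrac{\epsilon}{1-2\epsilon}$, and then appeals to a continuity/intermediate-value lemma (Lemma~\ref{ovvio}) to produce a closed set of \emph{any} prescribed capacity in $(0,1/2)$. You instead avoid that lemma entirely by using the explicit one-parameter family $\partial T'_y$ with capacities $\tfrac{1}{j+2}$, which is dense enough to land in $[t_n,2t_n)$ since $t_n<1/2$; the price is a factor $2$ in the constant, $R=\dfrac{2\epsilon}{1-2\epsilon}$, and only $\cpc^T(E)\ge\epsilon$ rather than equality --- but the theorem as stated asks only for $\ge\epsilon$, so nothing is lost. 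Your route is more elementary and fully constructive; the paper's is slightly sharper and reusable (the same lemma feeds into the proof of Theorem~\ref{negative}).
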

In fact, we can be more precise:
$$
R=\frac{\epsilon}{1-2\epsilon}.
$$
\begin{proof}
Consider a set $E$ such that $\cpc^T(E)=\epsilon$, a positive integer $n$, and suppose that, at each 
step $j=1,\dots,n$ the set splits in two copies having the same capacity. Namely, the set $E$ 
splits into two copies $E_+\subseteq\partial T_{o_+}$ and $E_-\subseteq\partial T_{o_-}$ having equal capacities, 
and so on, iterating. In the end we get, corresponding to the $2^n$ points
$x_1^n,\dots,x_{2^n}^n$ s.t. $d(x_j^n)=n$,  $2^n$ sets
$E_{1}^n\subset\partial T_{x_1},\dots,E_{2^n}^n\subset\partial T_{x_{2^n}}$ having equal capacity.

Let $e_n$ be the capacity of any of $E_j^n$ w.r.t. the root $x_j^n$. Indeed, $e_0=\epsilon$ and
$$
e_{n}=\frac{e_{n-1}}{2-2e_{n-1}}, 
$$
by (e). Iterating, we find
$$
\cpc^T_n(E)=2^ne_n=\frac{2^n\epsilon}{2^{n}-(2^{n+1}-2)\epsilon}\nearrow\frac{\epsilon}{1-2\epsilon}.
$$
To finish the proof, we must show that, for any given $\epsilon$ in $(0,1/2)$, there is a set $E$ 
having capacity $\cpc^T(E)=\epsilon$, which is the union of $2^n$ subsets having equal capacity, each lying in some 
$I(x_j^n)$, $1\le j\le 2^n$. This can be done if and only if we can find a subset $E_j^n$ of $I(x_j^n)$ such that
$\cpc^{T_{x_j^n}}(E_j^n)=e_n$; which (by obvious rescaling) is the same as finding a closed subset $F$ of 
$\partial T$ such that $\cpc^T(F)=e_n$. 
By induction and the fact that $\psi(t):=t/(2-2t)$ is a diffeomorphism
of $[0,1/2]$ onto itself, we have that $0<e_n<1/2$. Finally, it is easy, for each such $e_n$,
to produce a set $F$ with the desired capacity (for completeness, details are presented in Lemma \ref{ovvio} below).
\end{proof}

One might think that the splitting process could be continued for an infinite time, producing a 
stronger result. This is not the case: if one does not stop the procedure, the set $E$ ``fades away'' and it will have
null capacity, as Theorem \ref{positiveT} below shows.

It is also possible to prove, using an easy convexity argument, a quantitative, positive result
justifying Conjecture \ref{conjecture}.
\begin{theorem}\label{notallisbad} Given a set $E$ with
$\cpc^T(E)=\epsilon$, one has the estimate:
$$
\inf\left\{\cpc^T_n(E):\ \cpc^T(E)=\epsilon\right\}
=2^ne_n=\frac{\epsilon}{1-(2-2^{1-n})\epsilon}.
$$
\end{theorem}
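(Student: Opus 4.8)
The plan is to combine the additivity formula \eqref{additivity}, the recursion (e), and a single application of Jensen's inequality; sharpness will then follow immediately from the equal-splitting construction already carried out in the proof of Theorem \ref{negativeT}. First I would put the recursion in additive form. For $x\in T$ set $c_x=\cpc^{T_x}(E_x)$, so that $c_o=\cpc^T(E)=\epsilon$ by (f) and $0\le c_x\le\cpc^{T_x}(\partial T_x)=1/2$ for every $x$, by monotonicity of capacity. Solving the relation in (e) for its numerator gives $c_{x_+}+c_{x_-}=\Phi(c_x)$, where $\Phi(t):=t/(1-t)$. Since every vertex at level $k+1$ has a unique parent at level $k$ and every vertex of $T$ has exactly two children, \eqref{additivity} rewrites, with $S_k:=\sum_{d(x)=k}c_x=\cpc^T_k(E)$, as
$$
S_{k+1}=\sum_{d(y)=k}\bigl(c_{y_+}+c_{y_-}\bigr)=\sum_{d(y)=k}\Phi(c_y).
$$

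Next comes the convexity step. The function $\Phi$ is convex on $[0,1/2]$, so Jensen's inequality applied to the $2^k$ numbers $c_y$, $d(y)=k$, which all lie in $[0,1/2]$, yields $S_{k+1}\ge 2^k\,\Phi(2^{-k}S_k)$. Setting $T_k:=2^{-k}S_k\in[0,1/2]$ this reads $T_{k+1}\ge\psi(T_k)$, where $\psi(t)=\tfrac12\Phi(t)=t/(2-2t)$ is the increasing self-diffeomorphism of $[0,1/2]$ appearing in Theorem \ref{negativeT}. Since $T_0=\epsilon=e_0$ and $e_k=\psi(e_{k-1})$, the monotonicity of $\psi$ gives $T_n\ge e_n$ by induction on $k$, and hence
$$
\cpc^T_n(E)=S_n=2^nT_n\ \ge\ 2^ne_n=\frac{\epsilon}{1-(2-2^{1-n})\epsilon},
$$
the last equality being precisely the one obtained in the proof of Theorem \ref{negativeT}. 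This lower bound holds for every closed $E\subseteq\partial T$ with $\cpc^T(E)=\epsilon$.

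Finally, the bound is sharp: the equal-splitting set $E$ constructed in the proof of Theorem \ref{negativeT} satisfies $\cpc^T(E)=\epsilon$ and $\cpc^T_n(E)=2^ne_n$ by the computation made there (for that set all the intermediate Jensen inequalities become equalities), so the infimum in the statement is attained and equals $2^ne_n$. The only point that requires care is the routine bookkeeping keeping all the $c_x$, and with them the $T_k$, inside $[0,1/2]$, so that the convexity of $\Phi$ and the monotonicity of $\psi$ are legitimately available at every step; I expect no substantial obstacle beyond that.
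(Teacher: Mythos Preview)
Your argument is correct and follows essentially the same route as the paper: both rewrite the recursion (e) as $\frac{c(x_+)+c(x_-)}{2}=\psi(c(x))$ with $\psi(t)=t/(2-2t)$, apply convexity (Jensen) to pass from level to level, and conclude that the level-$n$ average dominates $\psi^{\circ n}(\epsilon)=e_n$. The only cosmetic difference is the bookkeeping: the paper runs an induction on $n$ from a fixed root and applies two-point convexity of $\psi^{\circ(n-1)}$, whereas you track the full level averages $T_k$ and apply Jensen to $\psi$ itself at each step---both arrive at the same bound, and both are made sharp by the equal-splitting example of Theorem~\ref{negativeT}.
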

The theorem's statement is more expressive if we replace $\epsilon=\cpc^T(\partial T)-\delta
=1/2-\delta$. The estimate becomes
\begin{equation}\label{expressive}
\inf\left\{\cpc^T_n(E):\ \cpc^T(E)=\epsilon\right\}=
\frac{1/2-\delta}{(2-2^{1-n})\delta+2^{-n}}:
\end{equation}
the lower bound roughly doubles each time $n$ increases by one, until $2^{-n}$ (the ``Euclidean distance''
between the plates of the condenser) reaches the scale of 
$\delta$; after that point, it stabilizes. The difficulty in transfering this result to 
the continuous case consists in the fact that the scale is the amount by which $E$ fails to have 
full capacity. This quantity, to the best of my knowledge, has never been investigated in depth: 
most applications 
involve estimates for sets having ``small enough'' capacity.

\begin{proof}
Let $\psi:[0,1/2]\to[0,1/2]$ be the function
$$
\psi(t)=\frac{t}{2(1-t)}.
$$
The function $\psi$ is a continuous, increasing, strictly  convex diffeomorphism of $[0,1/2]$
onto itself. Let $E$ be a fixed, closed subset of $\partial T$, and, for $x$ in $T$, 
let  $c(x):=\cpc^{T_x}(E\cap\partial T_x)$ be the capacity in the tree $T_x$ of the portion of $E$ 
lying in $\partial T_x$. The recursion relation for capacities can be written in the form
$$
\frac{c(x_+)+c(x_-)}{2}=\psi(c(x)).
$$
Let $\psi^{\circ n}=\psi\circ\dots\psi$ be the composition of $\psi$ with itself $n$
times. We claim that, for $a$ in $T$ fixed and $n$ positive integer
\begin{equation}\label{patatone}
\frac{1}{2^n}\sum_{x\ge a,\ d(x,a)=n}c(x)\ge\psi^{\circ n}(c(a)).
\end{equation}
We prove this by induction. For $n=1$ (\ref{patatone}) holds with equality by the 
recursion relation. Suppose (\ref{patatone}) holds for $n-1$. Then,
\begin{eqnarray*}
\frac{1}{2^n}\sum_{x\ge a,\ d(x,a)=n}c(x)&=&\frac{1}{2}
\left(\frac{1}{2^{n-1}}\sum_{x\ge a_+,\ d(x,a_+)=n-1}c(x)+\frac{1}{2^{n-1}}\sum_{x\ge a_-,\ d(x,a_-)=n-1}c(x)\right)\crcr
&\ge&\frac{1}{2}\left(\psi^{\circ(n-1)}(c(a_+))+\psi^{\circ(n-1)}(c(a_-))\right)\crcr
&\ge&\psi^{\circ(n-1)}\left(\frac{c(a_+)+c(a_-)}{2}\right)\crcr
&=&\psi^{\circ n}(c(a))\crcr
&=&\frac{c(a)}{2^n-(2^{n+1}-2)c(a)}.
\end{eqnarray*}
The explicit calculation of $\psi^{\circ n}$ can be checked by induction. Set $a=o$ to finish the proof.
\end{proof}

\paragraph{Proof of Theorem \ref{positive} on trees.}
\begin{theorem}\label{positiveT}
If $\cpc^T(E)>0$, then
$$
\lim_{n\to\infty}\cpc^T_n(E)=+\infty.
$$
\end{theorem}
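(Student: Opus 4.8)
The plan is to exploit the recursive/harmonic structure of tree capacity already recorded in properties (a)--(f), together with the additivity formula (\ref{additivity}). By (\ref{additivity}) we have $\cpc^T_n(E)=\sum_{d(x)=n}\cpc^{T_x}(E_x)$, and setting $c(x)=\cpc^{T_x}(E_x)$ as in the proof of Theorem \ref{notallisbad}, property (c) applied to each subtree gives $c(x)=h_x(x)$ where $h_x$ is the extremal function for $E_x$ in $T_x$. The first step I would take is to relate $h_x$ to the single global extremal function $h$ for $E$ in $T$: I claim $h$ restricted to $T_x$ is, up to the obvious normalization, comparable to $h_x$. Indeed $h$ satisfies $h(y)=h(y_+)+h(y_-)$ and $Ih\ge 1$ a.e.\ on $E$; the ``tail'' $\sum_{y\in[x,\zeta)}h(y)=Ih(\zeta)-Ih(x^{-1})$ is $\ge 1-H(x^{-1})$ on $E_x$ (off a null set), so $h/(1-H(x^{-1}))$ restricted to $T_x$ is admissible for $\cpc^{T_x}(E_x)$ provided $H(x^{-1})<1$. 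This yields the key inequality
\begin{equation}\label{tailbound}
c(x)=\cpc^{T_x}(E_x)\ \le\ \frac{1}{(1-H(x^{-1}))^2}\sum_{y\in T_x}h(y)^2 .
\end{equation}

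The second step is a lower bound on $\cpc^T_n(E)$ coming from the reverse direction. Since $\cpc^T(E)=\|h\|_{\ell^2}^2=h(o)>0$ by (a) and (f), and $\sum_{d(x)=n}$ exhausts the tree as $n\to\infty$, the tail masses $m_n:=\sum_{d(x)\ge n}h(x)^2$ decrease to $0$. The real content is to show these tails cannot decrease ``too fast'' relative to the defect $1-H$. I would argue by the following dichotomy. Fix a large threshold $M$; I want to produce, for all large $n$, at least (roughly) $M$ disjoint subtrees rooted at level $n$ whose individual capacities are each bounded below by a fixed positive constant, which by (\ref{additivity}) forces $\cpc^T_n(E)\to\infty$. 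To get one such subtree: by property (d), $H(x)\to 1$ $\mu$-a.e., so the measure $\mu$ (with $\mu(E)=\cpc^T(E)>0$ by (c)) is concentrated on boundary geodesics along which $H$ exceeds $1/2$ eventually; pick a vertex $x$ deep enough that $H(x)\ge 1/2$ and $\mu(S(x))>0$. Then $H_x:=I^*(\mu|_{S(x)})$ evaluated at $x$ equals $\mu(S(x))>0$, and one checks (using that the restricted potential still dominates $1-H(x)\le 1/2$ on $E_x$, off a null set) that $\cpc^{T_x}(E_x)$ is bounded below by a constant depending only on how $\mu$ distributes below $x$ — and crucially, by sub/super-additivity of the recursion, if we instead go down far enough we can split that positive mass across many level-$n$ descendants.

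The third step makes the last point quantitative via the recursion (e), equivalently $\tfrac12(c(x_+)+c(x_-))=\psi(c(x))$ with $\psi(t)=t/(2(1-t))$. The function $\psi$ is strictly convex and fixes $0$, with $\psi'(0)=1/2$; so passing to descendants contracts capacity, but the number of descendants doubles. If along \emph{every} geodesic from a root $x_0$ with $c(x_0)>0$ the capacities $c(x)$ decayed fast enough that $\sum_{d(x,x_0)=n}c(x)$ stayed bounded, one could run the convexity estimate of Theorem \ref{notallisbad} \emph{in reverse} — Jensen gives $\tfrac{1}{2^n}\sum c(x)\ge \psi^{\circ n}(c(x_0))$, but this lower bound tends to $0$, so it is the wrong direction and one needs instead an upper control that fails. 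The clean way, which I would adopt, is: suppose for contradiction $\cpc^T_n(E)\le B$ for all $n$. Then $\sum_{d(x)=n}c(x)\le B$, so at level $n$ at most $2B/\eta$ vertices have $c(x)\ge \eta$. But the equilibrium potential $H=Ih$ with $h=I^*\mu$ satisfies $1-H(x)=1-I^*\mu$-partial-sum $=\mu(\overline T)-(\text{mass on }P(x))$; combining (\ref{tailbound}) with $\sum_{d(x)=n}h(x)^2 = m_n-m_{n+1}\to 0$ and with (d) (so $1-H(x^{-1})\to 0$ along a.e.\ geodesic) produces a contradiction because $c(x)$ cannot go to $0$ along the $\mu$-typical geodesics: on those, $H\to 1$ forces the extremal tail of $E_x$ to stay ``of size $1$'', hence $c(x)\not\to 0$, so infinitely many level-$n$ vertices carry capacity bounded below, contradicting the bound $B$.

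The main obstacle, and the step I expect to require the most care, is the middle claim: turning ``$H(x)\to 1$ $\mu$-a.e.'' (property (d)) into a genuine \emph{uniform} lower bound $c(x)\ge \eta>0$ for a number of level-$n$ vertices that grows without bound. The subtlety is that $\mu$ could be spread thinly over many geodesics, so each individual $c(x)$ at level $n$ might be small even though their sum is not — but that is exactly what we want (their sum $\to\infty$), so the argument must be organized to track the \emph{sum} $\sum_{d(x)=n}c(x)$ directly rather than individual terms. Concretely I would show $\sum_{d(x)=n}c(x)\ge \sum_{d(x)=n}\big(1-H(x^{-1})\big)^{?}\cdot(\text{something involving }\mu(S(x)))$ and let $n\to\infty$, using dominated/monotone convergence against $\mu$ and the fact that $\int (1-H(x^{-1}))\,d\mu\to 0$ forces, after dividing, a blow-up of the capacity sum. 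Getting the exponents and the direction of the inequality right here is the crux; everything else is bookkeeping with (\ref{additivity}), (e), and the elementary properties of $\psi$.
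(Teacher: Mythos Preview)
Your proposal circles the right ideas---the global extremal $h$, the rescaling on subtrees, and the $\mu$-a.e.\ convergence $H\to 1$ from property (d)---but it does not close. The inequality you label (\ref{tailbound}) is in the wrong direction: it bounds $c(x)$ from \emph{above}, whereas to show $\sum_{d(x)=n}c(x)\to\infty$ you need a lower bound. You in fact note this yourself (``getting the exponents and the direction of the inequality right here is the crux''), and the sketch that follows---a contradiction via ``$c(x)\not\to 0$ along $\mu$-typical geodesics''---is not an argument: $c(x)$ \emph{can} tend to $0$ along every geodesic while $\sum_{d(x)=n}c(x)\to\infty$ (think $c(x)\equiv 1/n$ at level $n$), so pointwise behavior of $c(x)$ is not the right object.

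The two concrete ingredients you are missing are both simple once seen. First, your claim that the rescaled $h/(1-H(x^{-1}))$ is admissible on $T_x$ can be upgraded to an \emph{equality}: since $h=I^*\mu$, the restriction of $h$ to $T_x$ is $I^*(\mu|_{S(x)})$, so $h/(1-H(x^{-1}))$ is itself the extremal for $E_x$ in $T_x$; by property (f) this gives the exact identity
\[
h(x)\;=\;(1-H(x^{-1}))\,c(x),
\]
not your squared $\ell^2$ bound. Second, property (b) ($h(x)=h(x_+)+h(x_-)$) yields the conservation law $\sum_{d(x)=n}h(x)=h(o)=\cpc^T(E)$ for every $n$. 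Combining these,
\[
0<\cpc^T(E)=\sum_{d(x)=n}(1-H(x^{-1}))\,c(x),
\]
and now one only needs to make $1-H(x^{-1})$ uniformly small off a set of small $\mu$-measure: this is Egoroff applied to $H_n(\zeta):=H(x^{-1})$, $\zeta\in\partial S(x)$, together with $h(x)=I^*\mu(x)=\mu(S(x))$ to control the exceptional piece. The outcome is $\cpc^T(E)\le \delta\,\cpc^T_n(E)+\delta$ for $n$ large, and letting $\delta\to 0$ finishes. Your final paragraph is groping toward exactly this estimate (your ``something involving $\mu(S(x))$'' is $h(x)$), but without the equality and the conservation law the argument does not go through.
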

\begin{proof}
Let $h$ be the extremal function for the definition of $\cpc^T(E)$ and let $H=Ih$.
By properties (b) and (f),
$$
0<\cpc^T(E)=\sum_{x:\ d(x)=n}h(x).
$$
Let $x^{-1}$ be the parent of the point $x\in T$.
By Egoroff's Theorem, for all $\delta>0$ there is a set $E_\delta$ s.t. $\mu(E_\delta)<\delta$
and $1-H(x^{-1})\to0$ uniformly as $x\to\zeta\in\partial T\setminus E_\delta$. Here, Egoroff's 
Theorem is applied to the sequence of functions $H_n:\partial T\to\RR,$ $H_n(\zeta)=H(x)$ if
$d(x)=n$ and $\zeta\in \partial S(x)$.
By regularity of the measure $\mu$, doubling $\delta$,
we can assume that $E_\delta$ is open; i.e. 
it is union of ``arcs'' of the form $\partial S(y)$.

By rescaling, 
it is easy to see that 
$$
h(x)=(1-H(x^{-1}))\cdot \cpc^{T_x}(E_x).
$$ 
Let $h^x$ be the extremal function for $E_x$ in $T_x$. Then, $h^x$ satisfies the additivity 
relation (b) in $T_x$ and $\sum_{y\in P(\zeta)\setminus P(x^{-1})}h^x(y)\ge1$ for nearly all $\zeta$ in $E_x$.
An obvious candidate is $h^x=(1-H(x^{-1}))\cdot h$ and it is easy to see that such guess has the minimizing 
property of the desired extremal function. By (f), 
$$
\cpc^{T_x}(E_x)=h^x(x)=(1-H(x^{-1}))\cdot h(x).
$$
Since $H_n$ converges uniformly on $\partial T\setminus E_\delta$, there is $n(\delta)$ s.t., 
for $n\ge n(\delta)$ we have
$1-H(x^{-1})=1-H_n(\zeta)\le\delta$ if $d(x^{-1})=n$ and $\zeta\in \partial S(x)\cap\partial T\setminus E_\delta$.

Putting all this together, 
with $n\ge n(\delta)$,
\begin{eqnarray*}
0&<&\cpc^T(E)=\sum_{x:\ d(x)=n}h(x)\crcr
&=&\sum_{d(x)=n,\ \partial S(x)\cap (\partial T\setminus E_\delta)\ne\emptyset}(1-H(x^{-1}))\cdot \cpc^{T_x}(E_x)
+\sum_{d(x)=n,\ \partial S(x)\subset E_\delta}I^*\mu(x)\crcr
&\le&\sum_{d(x)=n,\ \partial S(x)\cap (\partial T\setminus E_\delta)\ne\emptyset}(1-H(x^{-1}))\cdot \cpc^{T_x}(E_x)
+\mu(E_\delta)\crcr
&\le&\delta\sum_{d(x)=n,\ \partial S(x)\cap (\partial T\setminus E_\delta)\ne\emptyset} 
\cpc^{T_x}(E_x)+\delta.
\end{eqnarray*}
Thus, 
$$
0<\cpc^T(E)\le \delta\sum_{d(x)=n} \cpc^{T_x}(E_x)+\delta=\delta(\cpc^T_n(E)+1),
$$
and the result follows letting $\delta\to0$.
\end{proof}
\section{Continuous capacities vs. discrete capacities.}
The usual dyadic decomposition of the unit disc can be thought of a as tree structure $T$ 
(as it is explained below). The boundary of the unit disc can be thought of as the boundary
$\partial T$ of 
the tree (this involves some technicalities, which are especially easy in our case, since the unit 
circle is topologically one-dimensional).

The following theorem is proved in \cite{BePe}. A proof which applies to a more general 
case is in \cite{ARSW}.
\begin{theorem}\label{theorema}
Let $E$ be a closed subset of $\partial T$, identified with a closed subset of $\TT$. Then,
$$
\cpc(E)\approx\cpc^T(E).
$$
\end{theorem}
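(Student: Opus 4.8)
The plan is to establish the equivalence $\cpc(E)\approx\cpc^T(E)$ by constructing explicit test functions in each direction, using the dyadic decomposition of $\DD$ to transfer between the two settings. First I would set up the correspondence: the standard Whitney/dyadic decomposition of $\DD$ into ``boxes'' $Q_x$ indexed by $x\in T$, where $Q_o$ is a neighbourhood of the origin containing $\DDo(0,1/2)$, each box $Q_x$ at generation $d(x)=n$ is a Carleson-type box of side $\approx 2^{-n}$ sitting at distance $\approx 2^{-n}$ from $\TT$, and the two children $Q_{x_\pm}$ partition the ``top half'' of $Q_x$. A boundary point $\zeta\in\TT$ then corresponds to the geodesic $P(\zeta)$ through the boxes shrinking to $\zeta$, giving the identification $\partial T\cong\TT$ up to a countable set. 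The key geometric facts are that $Q_x$ meets only $Q_{x^{-1}}$, $Q_{x_\pm}$, and a bounded number of boxes at the same generation, and that the Euclidean distance from $\DD$ to $\TT$ across generation $n$ is $\approx 2^{-n}$.

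For the inequality $\cpc(E)\lesssim\cpc^T(E)$, I would take a near-optimal $\varphi\ge0$ on $T$ with $I\varphi\ge1$ on $E$ and $\|\varphi\|_{\ell^2}^2\le 2\cpc^T(E)$, and build a continuous $u$ on $\DD$ that is (roughly) constant $\approx\sum_{y\le x}\varphi(y)$ on each box $Q_x$, interpolated continuously across box boundaries so that the total jump across $\partial Q_x\cap\partial Q_{x^{-1}}$ is $\varphi(x)$. Since $|\nabla u|$ on $Q_x$ is of order $\varphi(x)/(\text{diam }Q_x)\approx 2^{d(x)}\varphi(x)$ and $|Q_x|\approx 2^{-2d(x)}$, the Dirichlet energy contribution from $Q_x$ is $\approx\varphi(x)^2$, so $\|\nabla u\|_{L^2(\DD)}^2\lesssim\sum_{x\in T}\varphi(x)^2\lesssim\cpc^T(E)$; one checks $u\ge1$ on $E$ (as a boundary limit) and $u\le 0$ on $\DDo(0,1/2)$ by choosing the constant on $Q_o$ appropriately. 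For the reverse inequality $\cpc^T(E)\lesssim\cpc(E)$, I would take a near-optimal $u\in C^1(\DD)$ for $\cpc(E)$, set $\varphi(x)$ to be (a constant times) the oscillation of $u$ over a slightly enlarged box $\widetilde{Q}_x$, and use a telescoping/Poincaré argument along geodesics to get $I\varphi(\zeta)\gtrsim 1$ for $\zeta\in E$, while Poincaré's inequality on each $\widetilde Q_x$ gives $\varphi(x)^2\lesssim\int_{\widetilde Q_x}|\nabla u|^2$; bounded overlap of the $\widetilde Q_x$ then yields $\|\varphi\|_{\ell^2}^2\lesssim\|\nabla u\|_{L^2(\DD)}^2$.

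The main obstacle I anticipate is the boundary behaviour and the handling of the null-capacity exceptional set: the tree condition ``$I\varphi\ge1$ on $E$'' is pointwise on $\partial T$, whereas the disc condition involves radial limits of a $C^1$ function, and matching these requires care where $u$ is merely continuous up to $\TT$ or where the boundary limits fail on a set of capacity zero (which, being null for the relevant capacity, does not affect the infimum, but this must be argued). A secondary technical point is ensuring the interpolated function in the first direction is genuinely $C^1$ in the interior while keeping the energy estimate sharp; one smooths the piecewise-affine model on an overlap scale comparable to the box size, which costs only a bounded factor. Since Theorem \ref{theorema} is quoted from \cite{BePe} and \cite{ARSW}, in the paper one would simply cite those references for the details; the sketch above indicates why the comparison holds and isolates the boundary-limit identification as the one genuinely delicate ingredient.
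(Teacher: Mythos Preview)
Your proposal is correct, and you have already observed the key point: the paper does not give its own proof of Theorem~\ref{theorema} but simply cites \cite{BePe} and \cite{ARSW}. Your sketch is a faithful outline of the standard argument in those references, and it also matches the spirit of the paper's proof of the related Lemma~\ref{easypiece} (sampling $u$ at box centers and controlling differences by $\int|\nabla u|^2$ via the mean value property), so there is nothing to add.
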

In this section, we prove a similar result for condenser capacities. For $r=1-2^{-n}$, let 
$$
\cpc_n:=\cpc(E,\DDo(0,r)).
$$
\begin{theorem}\label{comparison}
If $E$ is a closed subset of $\TT$, identified with a closed subset of $\partial T$, then
$$
\cpc_n(E)\approx\cpc^T_n(E).
$$
\end{theorem}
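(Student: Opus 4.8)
The plan is to establish the two inequalities $\cpc_n(E)\lesssim\cpc_n^T(E)$ and $\cpc_n^T(E)\lesssim\cpc_n(E)$ separately, in each case by transferring a near-extremal function from one side to the other. The model to follow is the proof of Theorem \ref{theorema} (the Benjamini--Peres comparison for set capacities), but with the extra constraint that the transferred function vanish on the inner plate $\DDo(0,r)$ (respectively on the vertices at generation $n-1$). The key structural observation is that $r=1-2^{-n}$ and the dyadic generation $n$ are matched exactly so that the annulus $\{1-2^{-n}\le|z|<1\}$ corresponds, under the dyadic decomposition, to the union of the subtrees $T_x$ with $d(x)=n$; this is why the additivity formula \eqref{additivity} on the tree side has a continuous analogue, namely that $\cpc_n(E)$ splits (up to constants, by subadditivity and a Harnack/quasi-additivity argument) as a sum of condenser capacities in the Carleson-type boxes $Q_x$ associated to vertices $x$ with $d(x)=n$.

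For the direction $\cpc_n(E)\lesssim\cpc_n^T(E)$: I would take the discrete extremal $\varphi\ge0$ for $\cpc_n^T(E)$, which satisfies $I\varphi\ge1$ on $E$ and $I\varphi\equiv0$ at generation $n-1$ (so $\varphi$ is supported on $T\setminus\{d(x)\le n-1\}$), and build a continuous test function $u$ on $\DD$ by the standard recipe: on each Carleson box $Q_x$ one puts a bump whose gradient has $L^2$-norm comparable to $\varphi(x)$, arranged so that the radial sums telescope to $I\varphi$ along (almost) every radius. Because $\varphi$ is supported beyond generation $n-1$, the resulting $u$ is identically $0$ on the disc $\DDo(0,r)$ with $r=1-2^{-n}$, and $u\ge1$ on $E$ up to a set of capacity zero (which can be absorbed, since on the set where $I\varphi<1$ one argues as in \cite{BePe,ARSW}). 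This gives $\cpc_n(E)\le\|\nabla u\|_{L^2}^2\lesssim\|\varphi\|_{\ell^2}^2=\cpc_n^T(E)$.

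For the reverse direction $\cpc_n^T(E)\lesssim\cpc_n(E)$: I would take a near-extremal $u$ for $\cpc_n(E)$, so $\|\nabla u\|_{L^2(\DD)}^2\le\cpc_n(E)+\varepsilon$, $u\ge1$ on $E$, $u\le0$ on $\DDo(0,r)$, and discretize by setting $\varphi(x)$ to be (a constant times) the oscillation of $u$ across the box $Q_x$, e.g. $\varphi(x)=|u_{Q_x}-u_{Q_{x^{-1}}}|$ where $u_Q$ is the average of $u$ over $Q$. A Poincaré inequality on each box gives $\sum_x\varphi(x)^2\lesssim\|\nabla u\|_{L^2}^2$, the telescoping $\sum_{x\in P(\zeta)}\pm\varphi(x)$ controls $u$ along radii and hence $I|\varphi|\gtrsim 1$ on $E$ (again modulo the boundary-values technicality), and — crucially — since $u\le0$ on $\DDo(0,r)$, one checks that $u_{Q_x}\approx0$ for all $x$ with $d(x)\le n-1$, so the truncated sum $\sum_{x\in P(\zeta),\,d(x)\ge n}|\varphi(x)|$ already does the job; feeding this into \eqref{additivity} yields $\cpc_n^T(E)\lesssim\|\varphi\|_{\ell^2}^2\lesssim\cpc_n(E)+\varepsilon$.

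The main obstacle I anticipate is the matching of the "distance between the plates": on the tree side the condenser constraint is the clean algebraic condition $I\varphi(x)=0$ for $d(x)=n-1$, whereas on the disc side $u\le0$ merely on the solid disc $\DDo(0,r)$, and one must show that this forces $u$ to be essentially zero on the whole generation-$n-1$ layer of boxes (not just near $|z|=r$). This is where the exact choice $r=1-2^{-n}$ and a Harnack-type argument (or a direct Poincaré estimate comparing $u_{Q_x}$ to $u$ on the part of $Q_x$ inside $\DDo(0,r)$) must be used carefully; one loses a constant here, but — as the introduction emphasizes — the loss happens only once, in this single passage between tree and disc, so the comparability constants in $\cpc_n(E)\approx\cpc_n^T(E)$ are uniform in $n$. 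The remaining technical point, handling the "up to a set of null capacity" in $I\varphi\ge1$ on $E$, is routine and identical to the treatment in \cite{BePe} and \cite{ARSW}.
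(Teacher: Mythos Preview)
Your overall strategy---transfer a (near-)extremal in each direction, as in the proof of Theorem \ref{theorema}---is sound, and for the inequality $\cpc_n^T(E)\lesssim\cpc_n(E)$ your discretization by box-averages and Poincar\'e is essentially the paper's Lemma \ref{easypiece} (which uses point evaluations $h(\alpha)=\varphi(z(\alpha))-\varphi(z(\alpha^{-1}))$ and the mean value property instead). Your ``main obstacle'' in that direction is in fact a non-issue: for $d(x)\le n-1$ the Bergman box $Q_x$ lies entirely in $\DDo(0,r)$, where the extremal $u$ vanishes, so the truncation $I\varphi(x)=0$ at level $n-1$ is automatic---no Harnack argument is needed.

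Where your proposal diverges from the paper is in the opposite inequality $\cpc_n(E)\lesssim\cpc_n^T(E)$, which you treat as the routine direction (``standard recipe: bumps on boxes''). The paper does \emph{not} build a single global test function on $\DD$ from the tree extremal. Instead it localizes: it covers $E$ by the arcs $I(n,j)$, surrounds each by a wider curvilinear rectangle $R_j\subset A_n$, and proves a separate cut-off lemma (Lemma \ref{cutandpaste}) showing that the condenser capacity in $R_j$ with vanishing on the \emph{full} three-sided boundary $I_{R_j}$ is comparable to the capacity with vanishing only on the inner arc $I'_{R_j}$. This is the substantive step: it lets the local extremals be extended by zero and glued with no gradient created at the radial seams. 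After a $5$-family separation so the $R_j$'s are disjoint, Theorem \ref{theorema} is applied inside each rectangle, and tree-side additivity \eqref{additivity} finishes. Your direct bump construction would have to confront exactly these angular seams---adjacent arcs $I(n,j)$, $I(n,j+1)$ are neighbours on $\TT$ but may be far apart in $T$, so the naive extension of $I\varphi$ can jump there---and this is not handled by the Benjamini--Peres/ARSW recipe as stated for the $n=0$ case. It is likely fixable, but it is where the real work lies, not in the direction you flagged.
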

\paragraph{The dyadic decomposition of the disc.} For integers $n\ge0$ and $1\le j\le 2^n$,
consider the \it Bergman box \rm
\begin{equation}\label{Whitney}
Q(n,j)=\left\{z=re^{i\theta}\in\DD:\ \frac{1}{2^{n+1}}<1-r\le\frac{1}{2^n},
\ \frac{j-1}{2^n}\le\frac{\theta}{2\pi}<\frac{j}{2^n}\right\},
\end{equation}
and let $\TTT=\left\{(n,j):\ n\ge0,\ 1\le j\le 2^n\right\}$ be the set of such boxes. We associate
to each $Q=Q(n,j)$ in $\TTT$: a distinguished point $z(Q)$ in $Q$, 
$$
z(Q)=(1-2^{-n-1/2})e^{i\frac{j-1/2}{2^n}};
$$ 
a \it Carleson box \rm 
$$
S(Q)=\left\{z=re^{i\theta}\in\DD:\ 0<1-r\le\frac{1}{2^n},
\ \frac{j-1}{2^n}\le\frac{\theta}{2\pi}<\frac{j}{2^n}\right\};
$$
and a distinguished boundary arc $I(Q)$ in $\TT$, 
$$
I(Q)=\left\{e^{i\theta}\in\DD:
\ \frac{j-1}{2^n}\le\frac{\theta}{2\pi}<\frac{j}{2^n}\right\}.
$$
We will freely use obvious variations on the notation just introduced. For instance, we write $I(n,j)=I(Q)$ when $Q=Q(n,j)$. Also, we might write $Q=Q(I)$ if $I=I(Q)$. Etcetera.

\paragraph{The tree structure.} The set $\TTT$ is given a \it tree structure, \rm which will be denoted by the same letter $\TTT$. The points of $\TTT$ are the \it vertices. \rm 
There is an \it edge \rm of the tree between $(n.j)$ and $(m,i)$ if $n=m+1$ and $I(n,j)\subseteq I(m,i)$ ($I(n,j)$ is one of the two halves of the arc $I(m,i)$) or, viceversa, if $m=n+1$ and $I(m,i)\subseteq I(n,j)$. 
The \it level \rm of the box $Q=Q(n,j)$ is $d_\TTT(Q):=n$; so that $I(Q)=2^{-d_\TTT(Q)}$. 
Note that there is just one vertex $o:=(0,1)$ having level $d_\TTT(o)=0$: it is the \it root \rm of the tree $\TTT$.
Boxes and labels for boxes are sometimes identified: $Q(n,j)\equiv(n,j)$.

We begin with the easy inequality in Theorem \ref{comparison}.
\begin{lemma}\label{easypiece}
$$
\cpc^T_n(E)\lesssim\cpc_n(E).
$$
\end{lemma}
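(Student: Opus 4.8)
The plan is to produce, from any admissible continuous potential $u$ for the condenser $(E,\DDo(0,r))$ with $r=1-2^{-n}$, a discrete admissible function $\varphi\ge0$ on $T$ for the definition of $\cpc^T_n(E)$, with $\|\varphi\|_{\ell^2}^2\lesssim\|\nabla u\|_{L^2(\DD)}^2$. Since $\cpc^T_n(E)$ is an infimum over such $\varphi$, taking the infimum over $u$ on the right then yields the claim. The natural discretization is to let $\varphi(Q)$ measure the oscillation of $u$ across the Bergman box $Q$: concretely, set $\varphi(Q)=\max\bigl(0,\,u(z(Q))-u(z(Q^{-1}))\bigr)$ for $Q\ne o$, and $\varphi(o)=\max(0,u(z(o)))$, where $Q^{-1}$ is the parent box. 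This is a standard device (it is exactly the kind of comparison underlying Theorem \ref{theorema} / \cite{BePe}, \cite{ARSW}); the new point is only that the ``cutoff level'' $n$ on the tree side must be matched to the plate $\DDo(0,r)$ on the disc side, which is why we chose $r=1-2^{-n}$.

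First I would check admissibility of $\varphi$. For the constraint $I\varphi\ge1$ on $E$: given $\zeta\in E\subseteq\TT$, the geodesic $P(\zeta)$ consists of the boxes $Q_0=o\supset Q_1\supset Q_2\supset\cdots$ shrinking to $\zeta$, with $z(Q_k)\to\zeta$ radially. Since $u$ has radial limit $\lim_{\rho\to1}u(\rho\zeta)\ge1$ (it is $\ge1$ on $E$ by definition, with the boundary-limit convention in the paper), telescoping gives $\sum_{k\ge0}\bigl(u(z(Q_k))-u(z(Q_{k-1}))\bigr)=\lim_k u(z(Q_k))-0\ge1$ (reading $u(z(Q_{-1}))=0$), hence $I\varphi(\zeta)\ge1$ after dropping to the positive part — here one must be slightly careful that passing to positive parts only increases each partial sum, so the inequality survives. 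For the constraint $I\varphi(Q)=0$ when $d_\TTT(Q)=n-1$: the box $Q$ at level $n-1$ satisfies $1-r=2^{-n}\le 1-|z|$ throughout $S(Q)$ is false — rather, the distinguished point $z(Q^{-1})$ of its \emph{parent} and indeed $z(Q)$ itself has $1-|z(Q)|=2^{-(n-1)-1/2}>2^{-n}=1-r$, so $z(Q)$ and all its ancestors lie in $\DDo(0,r)$, on which $u\le 0$; thus every term $u(z(Q_k))-u(z(Q_{k-1}))$ for $Q_k$ an ancestor of (and including) $Q$ is a difference of two numbers $\le 0$, but that is not automatically $\le 0$. To force $I\varphi=0$ on level $n-1$ cleanly, I would instead modify $\varphi$ by zeroing it on the first $n$ levels (levels $0,\dots,n-1$), i.e. replace $\varphi$ by $\varphi\cdot\mathbf 1_{\{d_\TTT(Q)\ge n\}}$; this kills the required sum on level $n-1$ exactly, only decreases $\|\varphi\|_{\ell^2}^2$, and preserves $I\varphi\ge1$ on $E$ because the ``lost'' part $\sum_{k<n}(u(z(Q_k))-u(z(Q_{k-1})))=u(z(Q_{n-1}))\le 0$, so the truncated tail is still $\ge 1$.

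Next is the energy estimate $\|\varphi\|_{\ell^2}^2\lesssim\|\nabla u\|_{L^2(\DD)}^2$, which is the routine-but-essential geometric heart. For each box $Q$, $z(Q)$ and $z(Q^{-1})$ are two points in the slightly enlarged box $\widehat Q:=S(Q^{-1})\cap\{1-|z|>c2^{-d_\TTT(Q)}\}$ (a Whitney-type region of bounded hyperbolic diameter), and $u$ is $C^1$ there; by a standard chain/telescoping-plus-Poincaré argument along a path of bounded length relative to $\mathrm{diam}\,\widehat Q$, $|u(z(Q))-u(z(Q^{-1}))|^2\lesssim \int_{\widehat Q}|\nabla u|^2$. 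The regions $\widehat Q$ have bounded overlap (each point of $\DD$ lies in $\widehat Q$ for only $O(1)$ boxes $Q$), so summing over $Q\in T$ gives $\sum_Q\varphi(Q)^2\lesssim\sum_Q\int_{\widehat Q}|\nabla u|^2\lesssim\int_\DD|\nabla u|^2$. One subtlety: the excerpt defines $\cpc$ via $C^1$ functions only, so I may first regularize $u$ (mollify slightly in $\DD$) without changing the boundary behavior on $E$ and $\DDo(0,r)$, or simply note that the infimum is unchanged under such approximation. The main obstacle, then, is not any single deep step but the bookkeeping at the two ``interfaces'': matching the plate $|z|\le r=1-2^{-n}$ to the tree cutoff at level $n-1$ so that $I\varphi=0$ exactly holds there, and making sure the positive-part truncation does not destroy $I\varphi\ge1$ on $E$ — both handled by the level-$n$ truncation above, but worth stating carefully. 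Combining the three pieces (admissibility, the level truncation, the bounded-overlap energy bound) and taking infima over $u$ completes the proof.
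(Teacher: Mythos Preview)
Your approach is the same as the paper's---discretize an admissible potential by sampling at the centers $z(Q)$ of the Bergman boxes, then bound $\sum_Q|u(z(Q))-u(z(Q^{-1}))|^2$ by the Dirichlet integral via bounded overlap of Whitney neighborhoods---so the outline is right. There is, however, one genuine gap and two smaller imprecisions worth fixing.

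\textbf{The gap.} The estimate
\[
|u(z(Q))-u(z(Q^{-1}))|^2\ \lesssim\ \int_{\widehat Q}|\nabla u|^2
\]
is \emph{not} a Poincar\'e inequality and is simply false for a general $C^1$ function evaluated at two specified points: a line integral along a path of length $\approx 2^{-d(Q)}$ gives only a one-dimensional $L^2$ bound, and there is no mechanism to spread it over the two-dimensional region $\widehat Q$. The paper closes this by taking $u=\varphi$ to be the \emph{extremal} function for $\cpc_n(E)$, which is harmonic in the annulus $\{1-|z|<2^{-n}\}$; then $\nabla\varphi$ is harmonic, the mean value property upgrades a pointwise value of $|\nabla\varphi|$ on the segment to an average over a small disc $B_\alpha$ of radius $\approx 1-|z(\alpha)|$, and Jensen (Cauchy--Schwarz) gives exactly the displayed inequality. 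If you want to keep an arbitrary admissible $u$, you must replace point evaluations by box averages $\tilde u(Q)=\fint_{B_Q}u$; then $|\tilde u(Q)-\tilde u(Q^{-1})|^2\lesssim\int_{\widehat Q}|\nabla u|^2$ is a genuine Poincar\'e estimate, and the rest of your argument goes through unchanged.

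\textbf{Smaller points.} (i) The sequence $z(Q_k)$ approaches $\zeta$ \emph{nontangentially}, not radially. For a function with finite Dirichlet integral the nontangential limit exists and equals the radial limit only quasi-everywhere, so $I\varphi\ge1$ holds on $E$ \emph{up to a set of null capacity}; the paper states it exactly this way, and that is all one needs. (ii) Note that the definition of $\cpc^T_n$ in the paper does not impose $\varphi\ge0$, so the positive-part truncation is unnecessary; and since the extremal $u$ actually vanishes on $\DDo(0,r)$, one gets $h(\beta)=0$ for $d(\beta)\le n-1$ for free, without your separate level-$n$ zeroing step. Your handling of both is correct, just more work than required.
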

\begin{proof}
Consider the subtrees $T_x$ of $T$, $d(x)=n$, viewed as trees of Bergman boxes, as above. For each 
$\alpha$ in $T_x$, let $z(\alpha)$ be the center of the box $Q(\alpha)$ in $\DD$. Let $\varphi$ be the 
extremal function for the definition of $\cpc_n(E)$ and define a function $h:T\to\RR$ by
$$
h(\alpha):=\varphi(z(\alpha))-\varphi(z(\alpha^{-1})).
$$
It is clear that $h(\beta)=0$ for $d(\beta)\le n-1$ and that
$$
\sum_{\gamma=x}^\alpha h(\gamma)=\varphi(z(\alpha))\ \forall \alpha \in T_x.
$$
Estimating differences $h(\alpha):=\varphi(z(\alpha))-\varphi(z(\alpha^{-1}))$ 
and integrating, we see that
\begin{equation}\label{giacomo}
\|h\|_{\ell^2(T)}^2\lesssim\|\nabla\varphi\|_{L^2(\DD)}^2.
\end{equation}
In fact, $\varphi$ is harmonic in the annulus $\{re^{i\theta}:\ 0<1-r\le 2^{-n}\}$, hence
\begin{eqnarray*}
&&|\varphi(z(\alpha))-\varphi(z(\alpha^{-1}))|\crcr
&&=\int_{z(\alpha^{-1})}^{z(\alpha)}\nabla\varphi(w)\cdot dw\crcr
&&\lesssim(1-|z(\alpha)|)|\nabla\varphi(w(\alpha))|\crcr
&&\text{for\ some\ $w(\alpha)$\ in\ the\ closure\ of\ }\ Q(\alpha)\cup Q(\alpha^{-1})\crcr
&&=(1-|z(\alpha)|)\left|\frac{1}{|B_\alpha|}\int_{B_\alpha}\nabla\varphi(w)dA(w)\right|\crcr
&&\text{by the Mean Value Property,}\crcr
&&\text{where\ $dA$\ is\ area\ measure\ and\ $B_\alpha$\ is\ a\ small\ disc\ centered\ at\ $w(\alpha)$}\crcr
&&\text{having\ radius\ and\ distance\ from\ $\TT$\ comparable\ to\ }(1-|z(\alpha)|)\crcr
&&\lesssim \left(\int_{B_\alpha}|\nabla\varphi(w)^^2dA(w)\right)^{1/2}\crcr
&&\text{by\ Jensen's inequality.}
\end{eqnarray*}
Estimate (\ref{giacomo}) follows, since the discs $B_\alpha$ have bounded overlapping.

On the other hand, as $\alpha\to\zeta\in\partial T$ in $T$, $z(\alpha)\to\Lambda(\zeta)$, the 
image of $\zeta$ in $\TT$, nontangentially. In turn, this implies that
$$
Ih(\alpha)=\varphi(z(\alpha))\to1,
$$
but for a set of null capacity in $\partial T$ (actually, the preimage of a set on null capacity in
$\TT$; but by Theorem A this is the same as null capacity in $\partial T$).

Then, $h$ is admissible for the definition of tree capacity $E$; hence (\ref{giacomo}) 
implies the lemma.
\end{proof}

We now come to the more difficult inequality in Theorem \ref{comparison},
\begin{equation}\label{lesseasy}
\cpc_n(E)\lesssim\cpc_n^T(E).
\end{equation}
 We start with a localization lemma for the 
condenser capacity.

Fix integer $n\ge1$, large enough, and let
$E_j=E\cap I_{n,j}$, where $I_{n,j}$ ($1\le j\le2^n$) is the dyadic arc on $\TT$ defined before.
Let $A_n=\Delta\setminus\overline{\Delta(0,1-2^{-n})}$ be the annulus and let $R\subset A_n$
be the curvilinear rectangle
$$
R=\left\{re^{it}\in A_n:\ \frac{-2}{2^n}\le\frac{t}{2\pi}\le\frac{3}{2^n}\right\}.
$$ 
and let $I^\prime_R=\partial R\cap\partial\Delta(0,1-2^{-n})$ be the side of $R$ which is closest to 
the center of $\Delta$. We also need $I_R$, the union of $I^\prime_R$ and of the parts of $\partial R$
lying on the radii $\frac{t}{2\pi}=\frac{-2}{2^n}$ and $\frac{t}{2\pi}=\frac{3}{2^n}$.  Define
$$
\cpc_R(I_R^\prime,E_0)=\inf\left\{\|\nabla\varphi\|_{L^2(R)}^2:\ \varphi|_{I_R}=0,\ \varphi|_{E_0}\ge1\right\}
$$
to be the capacity of the condenser $(I_R^\prime,E_0)$ in $R$.
\begin{lemma}\label{cutandpaste}
$$
\cpc_R(I_R,E_0)\lesssim\cpc_R(I_R^\prime,E_0).
$$
\end{lemma}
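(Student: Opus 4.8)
The plan is to show that extending an extremal function from the bigger condenser $(I_R',E_0)$ to the slightly larger condenser $(I_R,E_0)$ costs only a bounded multiplicative factor. The point is purely geometric: $I_R$ differs from $I_R'$ only by the two short ``radial'' segments on the sides of the rectangle $R$, and these segments are {\em close} to $I_R'$ (at distance $\lesssim 2^{-n}$, comparable to their own length). So I would take the extremal $\varphi$ for $\cpc_R(I_R',E_0)$, which already vanishes on $I_R'$, and modify it in a thin collar near the two radial sides so that it vanishes on all of $I_R$, controlling the extra Dirichlet energy by a fixed constant times $\|\nabla\varphi\|_{L^2(R)}^2$.

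Concretely, first I would record that by symmetrizing in the angular variable we may assume $\varphi$ depends monotonically enough near the sides, or — more robustly — I would not assume anything and just do a cutoff. Let $\eta$ be a Lipschitz function on $R$ that is $0$ on a $c\,2^{-n}$-neighborhood of the two radial sides of $\partial R$ and $1$ outside a $2c\,2^{-n}$-neighborhood, with $|\nabla\eta|\lesssim 2^n$. Then $\psi:=\eta\varphi$ vanishes on $I_R$ (it vanishes on $I_R'$ because $\varphi$ does, and on the two radial segments because $\eta$ does), while $\psi\ge 1$ on $E_0$ provided $n$ is large enough that $E_0\subset I_{n,0}$ stays away from the cutoff collar — which holds since $E_0$ lies in the {\em middle} dyadic arc of the five making up $R$. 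Hence $\psi$ is admissible for $\cpc_R(I_R,E_0)$ and
$$
\cpc_R(I_R,E_0)\le\|\nabla\psi\|_{L^2(R)}^2\lesssim\|\nabla\varphi\|_{L^2(R)}^2+2^{2n}\int_{\text{collar}}|\varphi|^2\,dA.
$$
The first term is $\cpc_R(I_R',E_0)$. For the second, the collar is a thin strip of width $\sim 2^{-n}$ adjacent to the side $I_R'$ on which $\varphi=0$; by a one-dimensional Poincaré (Friedrichs) inequality in the radial direction, $\int_{\text{collar}}|\varphi|^2\,dA\lesssim 2^{-2n}\int_{\text{collar}}|\nabla\varphi|^2\,dA\lesssim 2^{-2n}\|\nabla\varphi\|_{L^2(R)}^2$, so the whole second term is $\lesssim\|\nabla\varphi\|_{L^2(R)}^2=\cpc_R(I_R',E_0)$, which is the claim.

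The main obstacle — really the only delicate point — is justifying the Poincaré inequality on the collar: one needs that along (almost) every radial segment crossing the collar, one endpoint lies on $I_R'$ where $\varphi=0$, so that $|\varphi(re^{it})|\le\int|\partial_r\varphi|\,dr$ and Cauchy–Schwarz gives the gain of $2^{-2n}$. This is immediate if $\varphi$ is, say, continuous up to $I_R'$ and the collar is chosen as an honest product region $\{1-2^{-n}\le r\le 1-2^{-n}+c2^{-n}\}\times\{\text{side angles}\}$; the admissible functions in the definition are smooth enough in the interior and attain the boundary value $0$ on $I_R'$, so by a routine approximation/trace argument this is fine. One small bookkeeping item is to check the geometry constants: the rectangle $R$ spans five dyadic arcs $I_{n,j}$, $j\in\{-2,-1,0,1,2\}$ in the obvious indexing, $E_0\subset I_{n,0}$ is the central one, and the two radial sides are two full dyadic-arc-widths away from $E_0$, so a cutoff collar of width $c2^{-n}$ with $c<1$ never touches $E_0$; this is where ``$n$ large enough'' is used, though in fact any $n\ge1$ works. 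No estimate here depends on $n$, which is exactly what is needed for the uniform constant in Theorem \ref{comparison}.
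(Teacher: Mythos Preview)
Your argument is correct, and it is genuinely different from the paper's. Both proofs multiply the extremal $\varphi$ for $\cpc_R(I_R',E_0)$ by an angular cutoff that vanishes near the two radial sides of $R$, and both reduce to controlling the cross term $\int|\varphi|^2|\nabla\chi|^2\,dA$ on the support of $\nabla\chi$. The paper bounds this term by first estimating $M:=\sup|\varphi|$ on that support: it uses that the extremal is harmonic, invokes the maximum principle to force the superlevel set $\{\varphi\ge M/2\}$ to contain a continuum joining the cutoff region to the central arc, and then compares with a standard rectangular condenser of unit capacity to conclude $M^2\lesssim\cpc_R(I_R',E_0)$. You instead bypass harmonicity entirely: on each radial segment in the collar you have $\varphi=0$ at the $I_R'$ endpoint, so a one-dimensional Friedrichs inequality gives $\int_{\text{collar}}|\varphi|^2\lesssim 2^{-2n}\int_{\text{collar}}|\nabla\varphi|^2$, which kills the factor $2^{2n}$ from $|\nabla\eta|^2$. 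Your route is shorter and works for any near-extremal admissible $\varphi$, not just the harmonic one; the paper's route, on the other hand, actually produces a pointwise $L^\infty$ bound on $\varphi$ away from $E_0$, which is a stronger intermediate statement. One cosmetic point: your phrase ``the collar is a thin strip \dots\ adjacent to the side $I_R'$'' is slightly misleading---the collar is thin in the \emph{angular} direction and has full radial extent; what matters is that every radial segment through the collar starts on $I_R'$, which is exactly what you use.
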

By trivial comparison, the opposite inequality $\cpc_R(I_R,E_0)\ge\cpc_R(I_R^\prime,E_0)$ holds. 
\begin{proof}
To prove the lemma, we use a cut-off argument. Let $\chi$ be a smooth cutoff function on $A_n$:
$$\chi(re^{it})=
\begin{cases}
1&\mbox{if}\ \frac{-1}{2^n}\le\frac{t}{2\pi}\le\frac{2}{2^n};\crcr
0&\mbox{if}\ \frac{t}{2\pi}\le\frac{-2}{2^n}\ \mbox{or}\ \frac{t}{2\pi}\ge\frac{3}{2^n}.
\end{cases}
$$
We can choose $\chi$ in such a way that $0\le\chi\le1$ on $A_n$ and that 
$$
\|\nabla\chi\|_{L^2(A_n)}^2\approx1.
$$
Let $\varphi$ be the extremal function for $\cpc_R(I_R^\prime,E_0)$. Then, $\varphi\cdot\chi$ is an 
admissible function for $\cpc_R(I_R^\prime,E_0)$. It suffices, then, to prove that

\smallskip

\noindent\bf Claim. \rm $\|\nabla(\varphi\cdot\chi)\|_{L^2(A_n)}^2\lesssim\cpc_R(I_R^\prime,E_0)$.

\smallskip

\noindent We have $\|\nabla(\varphi\cdot\chi)\|_{L^2(A_n)}^2\lesssim
\|\chi\nabla \varphi \|_{L^2(A_n)}^2+
\|\varphi\nabla \chi \|_{L^2(A_n)}^2=I+II$. The first summand is o.k.:
$I\le\|\nabla\varphi\|_{L^2(R)}^2=\cpc_R(I_R^\prime,E_0).$ About the second, the integrand is supported
in 
$$
Q=\left\{re^{it}\in A_n:\ 
\frac{-2}{2^n}\le \frac{t}{2\pi}\le
\frac{-1}{2^n}\right\}\cup\left\{re^{it}\in A_n:\  \frac{2}{2^n}\le\frac{t}{2\pi}\le\frac{3}{2^n}
\right\}
$$ 
and we are done if we show that
$$
M^2:=\sup_{z\in Q}|\varphi(z)|^2\lesssim \cpc_R(I_R,E_0).
$$
Let $K:=\left\{z\in R:\ \varphi(z)\ge M/2\right\}=\sqcup_j K_j$, where each $K_j$ is a connected component
of $K$: $K_j$ is closed in $R$ and its closure in the plane meets the boundary of $R$, by the maximum principle 
($\varphi$, being extremal, is harmonic in $R$). Let $K^\prime_j$ be a components of $K$ having a point in $Q$ 
and having nonempty interior (there must be one, by definition of $M$ and by continuity of $\varphi$).
If the closure of $K_j^\prime$ does not  meet $I_{n,0}$, the arc containing $E_0$,
we can replace $\varphi$ by $M/2$ on $K_j^\prime$, strictly reducing the Dirichlet integral of $\varphi$
on $R$, which contradicts the extremality of $\varphi$. Then, there is a continuum $K_j^\prime$
joining a point $z_0$ in $Q$ and a point $z^\prime$ in 
$Q_0=\left\{re^{it}\in A_n:\ 0\le \frac{t}{2\pi}\le\frac{1}{2^n}\right\}$ on which $\varphi\ge M/2$. 
Let 
$$
Q^\prime=\left\{re^{it}\in A_n:\ 
\frac{-1}{2^n}\le \frac{t}{2\pi}\le0\right\}\cup
\left\{re^{it}\in A_n:\  \frac{1}{2^n}\le\frac{t}{2\pi}\le\frac{2}{2^n}\right\}
$$ 
and let $I^\prime_1=\partial Q^\prime\cap\partial\Delta(0,1-2^{-n})$,
 $I^\prime_2=\partial Q^\prime\cap\partial\Delta(0,1)$.
Obvious 
comparison shows that 
\begin{eqnarray}
1&\approx&\cpc_{Q^\prime}(I^\prime_1,I^\prime_2)\crcr
&\le&\left\|\nabla\left(\frac{\varphi}{M/2}\right)\right\|_{L^2(Q^\prime)}^2\crcr
&&\text{because the function }\frac{\varphi}{M/2}\text{ is admissible for the condenser capacity}\crcr
&\le&\frac{4}{M^2}\|\nabla\varphi\|_{L^2(R)}^2\crcr
&\le&\frac{4}{M^2}\cpc_R(E_0).
\end{eqnarray}
i.e., $M^2\lesssim\cpc_R(E_0)$, as wished.
\end{proof}

We now come to the proof of (\ref{lesseasy}).

Let $R_j$ be a rectangle as $R$, but built starting from the set $E_j$. Let 
$$
E^{(k)}=\sqcup_{j=5n+k}E_j.
$$
Since the sum of the extremal functions for the five pieces of $E$ is admissible ffor $E$,
\begin{equation}\label{tuttiacasa}
\cpc_n(E)\le 5\sum_{k=0}^4\cpc_n(E^{(k)}).
\end{equation}
Also, by comparison:
\begin{equation}\label{servira}
\cpc_n(E^{(k)})\le\sum_n\cpc_R(I_{5n+k},E_{5n+k}).
\end{equation}
In fact, if $\varphi_n$ are extremal functions for $\cpc_{R_{5n+k}}(I_{R_{5n+k}},E_{5n+k})$,
extended to be zero in $A_n\setminus R_{5n+k}$, then
$$
\varphi=\sum_{n}\chi_{R_{5n+k}}\varphi_n
$$
is admissible for $\cpc_n(E^{(k)})$ and 
$\|\nabla\varphi\|_{L^2}^2=\sum_n\|\varphi_n\|_{L^2(R_{5n+k})}^2$. The inequality follows by 
definition of capacity.

By (\ref{tuttiacasa}), (\ref{servira}) and Lemma \ref{cutandpaste}, then:
\begin{equation}\label{parziale}
\cpc_n(E)\lesssim \sum_{k=0}^4\sum_n\cpc_{R_{5n+k}}(I_{R^\prime_{5n+k}}^\prime,E_{5n+k})=
\sum_l\cpc_{R_{l}}(I_{R^\prime_{l}}^\prime,E_{l}).
\end{equation}

\smallskip

The quantity $\cpc_{R_{l}}(I_{R^\prime_{l}}^\prime,E_{l})$ 
verifies the condition under which capacity can be discretized
as in \cite{BePe} or \cite{ARSW}. In fact, the proof of Theorem \ref{theorema} can be adapted without 
changes to show that
$$
\cpc_{R_{l}}(I_{R^\prime_{l}}^\prime,E_{l})\approx \cpc^{T_{x_l}}(E_j).
$$
Summing over $l$ and using \it additivity \rm of these special capacities in the tree $T$,
\begin{eqnarray*}
\cpc_n(E)&\lesssim&\sum_l\cpc^{T_{x_l}}(E_j)\crcr
&=&\cpc^T_n(E),
\end{eqnarray*}
as wished. The proof of Theorem \ref{comparison} is ended.
\paragraph{Proofs of the main theorems.}
\noindent{\bf Proof of Theorem \ref{positive}.} Since $r\mapsto\cpc(E,\DDo(0,r))$ is increasing,
it suffices to test the conclusion of the theorem on $r=1-2^{-n}$, for integer $n$. By Theorem 
\ref{comparison},
$$
\cpc(E,\DDo(0,1-2^{-n}))\gtrsim\cpc^T_n(E)\to\infty
$$ 
as $n\to\infty,$ by Theorem \ref{positiveT}.

\medskip

\noindent{\bf Proof of Theorem \ref{negative}.} If $\epsilon>0$ is small enough, then, 
by Theorem \ref{comparison} (rather, by the special case proved in \cite{BePe} and \cite{ARSW}),
if $\cpc(E)\le\epsilon$, then $0<\cpc^T(E)\le\epsilon^\prime<\cpc^T_n(\partial T)=1/2$. 
By Theorem \ref{negativeT}, there is $R(\epsilon)$
s.t. for all $n$ there is $E$ with $\cpc^T(E)\le\epsilon^\prime$ and $\cpc_n(E)\le R(\epsilon)$.
By Theorem \ref{comparison}, this implies Theorem \ref{negative}.

\medskip
We finish with the proof of a Lemma used in the proof of Theorem \ref{negative}.
\begin{lemma}\label{ovvio}
For each $0\le e\le1/2$ there is a closed subset $E$ of $\partial T$ such that $\cpc^T(E)=e$.
\end{lemma}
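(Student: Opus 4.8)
The plan is to build $E$ by a greedy, level-by-level procedure that at each stage either includes or excludes entire successor sets $S(x)$, so that the capacity of the partial set can be tracked exactly using the recursive relation (e) and the additivity relation \eqref{additivity}. Concretely, I would exploit the fact that $\cpc^T(\partial T)=1/2$, that $\cpc^T(\emptyset)=0$, and that capacity is monotone under inclusion of closed subsets of $\partial T$. The target value $e\in[0,1/2]$ will be approached from below by a nested increasing sequence of clopen (finite-level-determined) sets, and $E$ will be the closure of their union; one then checks the capacity of the limit equals $e$ by a continuity/semicontinuity argument.

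First I would set up the elementary building blocks. For a single vertex $x$ at level $d(x)=n$, the set $S(x)$ contributes, by rescaling, capacity $\cpc^{T_x}(\partial T_x)=1/2$ measured in $T_x$; and by iterating (e) (equivalently, the formula for $\psi^{\circ n}$ already computed in the proof of Theorem \ref{notallisbad}), the set $\bigcup_{j}S(x_j)$ over all $2^k$ vertices $x_j$ at level $k$ has capacity $\cpc^T_k(\partial T) = \cpc^T(\partial T) = 1/2$ — but a single $S(x)$ with $d(x)=k$, viewed inside all of $T$, has capacity $\psi^{\circ k}(1/2)$... wait, more usefully: the capacity in $T$ of $\partial T_x$ for $d(x)=k$ is obtained by applying the recursion, and these values densely fill a subinterval of $(0,1/2]$ as we vary $k$. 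The key observation is that by taking finite disjoint unions of such successor-sets at various levels, and using additivity \eqref{additivity} at a common level, the set of achievable capacities is dense in $[0,1/2]$: adding one more successor set $S(x)$ at a deep level $k$ perturbs the total capacity by an amount bounded by $\psi^{\circ k}(1/2)\to 0$, so the greedy algorithm ``whenever the current capacity is below $e$, adjoin the next available deep successor set'' produces partial sums converging to $e$.

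The main obstacle is the passage to the limit: the greedy construction yields an increasing sequence $E_1\subseteq E_2\subseteq\cdots$ of clopen sets with $\cpc^T(E_m)\to e$, but $\bigcup_m E_m$ need not be closed, so I must take $E=\overline{\bigcup_m E_m}$ and show $\cpc^T(E)=e$. The inequality $\cpc^T(E)\ge e$ is immediate from monotonicity. For $\cpc^T(E)\le e$ I would argue that the extremal functions $h_m$ for $\cpc^T(E_m)$ (which by (a)–(c) have $\|h_m\|_{\ell^2}^2=\cpc^T(E_m)\le 1/2$) form a bounded sequence in $\ell^2(T)$, extract a weak limit $h$, use lower semicontinuity of the norm and the fact that $Ih\ge 1$ on $\bigcup_m E_m$ hence (by continuity of $I\varphi$ along geodesics for the weak limit, plus the clopen structure) on all of $E$ outside a null set, so $h$ is admissible for $E$ and $\cpc^T(E)\le\|h\|_{\ell^2}^2\le\liminf\cpc^T(E_m)=e$. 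An alternative, cleaner route avoiding weak limits: control the ``tail'' directly — after stage $m$ the greedy algorithm only ever adds successor sets at levels $\ge k_m$ with $k_m\to\infty$, and the total capacity such a tail can contribute in $T$ is at most $\sum$ of terms each $\le \psi^{\circ k_m}(1/2)$, arranged so the geometric-type decay gives $\cpc^T(E\setminus E_m)\to 0$; combined with subadditivity of capacity this pins down $\cpc^T(E)=e$. I expect the bookkeeping that makes the greedy increments small enough — choosing at which levels to place the adjoined successor sets so that the running capacity converges to $e$ and not merely to some nearby value — to be the only genuinely delicate point, and it is handled by the fact that $\psi^{\circ k}(1/2)\to 0$, which makes the achievable-capacity set dense and the construction's error controllable at every stage.
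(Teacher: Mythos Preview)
Your approach is genuinely different from the paper's, and as written it contains a concrete error that undermines the argument.

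The paper's proof is an intermediate value argument: map $\partial T$ onto $[0,1]$ via the natural Lipschitz map $\Lambda$, set $f(t)=\cpc^T(\Lambda^{-1}([0,t]))$, and observe that $f$ is increasing with $f(0)=0$, $f(1)=1/2$. Continuity of $f$ follows from subadditivity together with regularity of capacity (continuity from above on decreasing compacta): $\cpc^T(\Lambda^{-1}([t,t+h]))\to\cpc^T(\Lambda^{-1}(\{t\}))=0$ as $h\to0$. The IVT then produces $E=\Lambda^{-1}([0,t])$, which is already closed, so no limit passage is needed.

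Your greedy construction is a reasonable alternative idea, but the quantitative control you invoke is wrong: $1/2$ is a \emph{fixed point} of $\psi(t)=t/(2(1-t))$, so $\psi^{\circ k}(1/2)=1/2$ for every $k$, and your claimed decay $\psi^{\circ k}(1/2)\to0$ is false. The correct computation for $\cpc^T(\partial T_x)$ with $d(x)=k$ iterates the \emph{one-child} recursion $c\mapsto c/(1+c)$ (not $\psi$, which encodes two equal children) starting from $1/2$, giving $1/(k+2)$; this does tend to $0$ and would restore your density claim. Even after that fix, the passage to the limit is not settled. Your weak-limit route fails as stated because $\chi_{P(\zeta)}\notin\ell^2(T)$ (the geodesic is infinite), so $Ih_m(\zeta)=\langle h_m,\chi_{P(\zeta)}\rangle$ is not a bounded linear functional and weak convergence of $h_m$ does not yield $Ih\ge1$ on $E$. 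Your subadditivity route needs the tail $\sum_{j>m}\cpc^T(\partial T_{x_j})$ to be summable, which forces the $x_j$ to sit at super-linearly growing depths, and you must then verify that the greedy increments still hit $e$ exactly rather than undershoot. All of this can be made rigorous, but it is substantially more delicate than the paper's one-parameter IVT argument, which sidesteps the limit issue entirely by working with a monotone family of sets that are closed at every stage.
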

\begin{proof}
Let $\Lambda:\partial T\to[0,1]$ be the map associating to a geodesic $\zeta$ in $\partial T$, 
$P(\zeta)=\{\zeta_k:\ k\ge0\}$ being an enumeration ot its vertices where $d(\zeta_n)=n$, 
the point $t$ in $[0,1]$ such that
$$
e^{2\pi it}=\cap_{k\ge0}\overline{I(Q(k,j))}.
$$
We assume that the geodesic ``to the extreme left'' maps to $0$, while that to the ``extreme right'' maps to $1$.

It is easy to prove that the map $\Lambda$ is continuous (in fact, Lipschitz) w.r.t. the metrics $\rho$ on $\partial T$
and Euclidean on $[0,1]$. Define a function $f:[0,1]\to[0,1/2]$ by
$$
f(t)=\cpc^T(\Lambda^{-1}([0,t])).
$$
Clearly $f(0)=0$, $f(1)=1/2$ and $f$ increases. It suffices to prove that $f$ is continuous.

We have the inequalities (for $h>0$):
\begin{eqnarray*}
f(t)&\le& f(t+h)\crcr
&=&\cpc^T(\Lambda^{-1}([0,t+h]))\le \cpc^T(\Lambda^{-1}([0,t]))+\cpc^T(\Lambda^{-1}([t,t+h]))\crcr
&&\text{by subadditivity of capacity}\crcr
&=&f(t)+o_{h\to0}(1),
\end{eqnarray*}
by regularity of capacity: $\lim_{h\to0}\cpc^T(\Lambda^{-1}([t,t+h]))=\cpc^T(\Lambda^{-1}([t,t]))=0$. Hence, $f$ is 
right continuous.

Similarly, one shows that $f(t-h)+o_{h\to 0}(1)\ge f(t)$, deducing that $f$ is left continuous.
\end{proof}


\begin{thebibliography}{999}
\bibitem[AH]{AH} Adams, David R.; Hedberg, Lars Inge Function spaces and potential theory. 
\bibitem[ARS]{ARS} Arcozzi, Nicola; Rochberg, Richard; Sawyer, Eric. 
Carleson measures for analytic Besov spaces.  
Rev. Mat. Iberoamericana  18  (2002),  no. 2, 443-510.
\bibitem[ARS2]{ARS2} Arcozzi, N.; Rochberg, R.; Sawyer, E. Capacity, Carleson measures, 
boundary convergence, and exceptional sets.  Perspectives in partial differential equations, 
harmonic analysis and applications,  1--20, Proc. Sympos. 
Pure Math., 79, Amer. Math. Soc., Providence, RI, 2008.
\bibitem[ARSW]{ARSW} Arcozzi, Nicola; Rochberg, Richard; Sawyer, Eric; Wick, Brett 
Nonlinear potential theory on trees, graphs and Ahlfors regular metric spaces, preprint.
\bibitem[ARSW2]{ARSW2}
Arcozzi, Nicola; Rochberg, Richard; Sawyer, Eric; Wick, Brett D. Bilinear forms on the Dirichlet space.  
Anal. PDE  3  (2010),  no. 1, 21-47,
\bibitem[BePe]{BePe} Benjamini, Itai; Peres, Yuval. Random walks on a tree and capacity in the 
interval.  Ann. Inst. H. Poincar\'e Probab. Statist.  28  (1992),  no. 4, 557-592.
\bibitem[Du1]{Du1} Change of harmonic measure in symmetrization.  (Russian)
Mat. Sb. (N.S.) 124(166) (1984), no. 2, 272–279. 
\bibitem[Du2]{Du2} Dubinin, V. N. Condenser capacities and majorization 
principles in the geometric theory of functions of a complex variable. (Russian)  Sib. 
Elektron. Mat. Izv.  5  (2008), 465--482.
\bibitem[Hal]{Hal} Haliste, K. On an extremal 
configuration for capacity.  Ark. Mat.  27  (1989),  no. 1, 97--104.
\bibitem[Karp]{Karp}Karp, D.
Capacity of a condenser whose plates are circular arcs.
Complex Var. Theory Appl. 50 (2005), no. 2, 103-122. 
\bibitem[LL]{LL} Leppington, Frank; Levine, Harold.
On the capacity of the circular disc condenser at small separation.
Proc. Cambridge Philos. Soc. 68 1970 235-254. 
\bibitem[PSz]{PSz} P\'olya, G.; Szeg\"o, G. Inequalities for the capacity of a condenser.  Amer. J. Math.  67,  (1945). 1-32.
\bibitem[Sz]{Sz} Szeg\"o, G. On the capacity of a condenser.  Bull. Amer. Math. Soc.  51,  (1945). 325-350. 
\end{thebibliography}
\end{document}